\documentclass[a4paper,11pt]{amsart}
\baselineskip=12pt

\evensidemargin= 0 cm \oddsidemargin= 0 cm \topmargin -0.5cm
\textheight 23.5cm \textwidth 16.3cm
\usepackage{euscript,
mathrsfs,color,latexsym,marginnote}
\usepackage{graphicx}
\usepackage{amsmath,amssymb,amsthm,amsfonts}
\usepackage{amssymb}
\usepackage[active]{srcltx}
\usepackage{geometry}
\usepackage{hyperref}
\usepackage{color}

\newtheorem{thm}{Theorem}[section]

\newtheorem{lem}[thm]{Lemma}
\newtheorem{prop}[thm]{Proposition}
\newtheorem{rem}[thm]{Remark}

\newcommand{\R}{\mathbb R}


\begin{document}
	
	\title[Monotonicity and Liouville-type theorems]{Monotonicity and  Liouville-type theorems for semilinear elliptic problems in the half space}
	
	\author{Berardino Sciunzi and Domenico Vuono $^{*}$}

	\email[Berardino Sciunzi]{berardino.sciunzi@unical.it}
    \email[Domenico Vuono]{domenico.vuono@unical.it}
	\address{$^{*}$Dipartimento di Matematica e Informatica, Università della Calabria,
		Ponte Pietro Bucci 31B, 87036 Arcavacata di Rende, Cosenza, Italy}
	

	\keywords{Semilinear elliptic equations, qualitative properties of the solutions, moving plane method}
	
	\subjclass[2020]{35B06, 35B51, 35J61.}

	\begin{abstract}
		We consider classical solutions to $-\Delta u = f(u)$  in half-spaces, under homogeneous Dirichlet boundary conditions.  We prove that any positive solution is strictly monotone increasing in the direction orthogonal to the boundary, provided that it is directionally bounded on finite strips. As a corollary, we deduce a new Liouville-type theorem for the Lane-Emden equation.
	\end{abstract}

 	\maketitle
  
	\section{Introduction}
	Our paper is mainly concerned with proving monotonicity properties of positive classical  solutions to $ -\Delta u = f(u)$ under zero Dirichlet assumptions in half spaces. In this context we improve earlier results by weakening the boundedness assumption on the solution. As a corollary, we deduce new Liouville-type theorems for the Lane-Emden equation. We prefer to start the presentation of the paper with the application to the Lane-Emden problem:
\begin{equation}\label{eq:laneemden}
\begin{cases}
        -\Delta u = u^{q} & \mbox{in $\R^n_+$,}\\
        u = 0 & \mbox{on $\partial\R^n_+$,},
    \end{cases}
\end{equation}
where $n\ge 2$ and $q>1$. We assume, with no loss of generality, that $\R^n_+=\{x_n>0\}$. The study of \eqref{eq:laneemden} has a long history. It is conjectured that the only nonnegative solution to \eqref{eq:laneemden} is the trivial one, namely $u \equiv 0$.

In their seminal work \cite{GS}, Gidas and Spruck proved that this is indeed the case provided $1<q\leq q_s(n)$, where $q_s(n):=(n+2)/{(n-2)_+}$.

For supercritical exponents $q > q_S(n)$, only partial results are known. Later on, Dancer \cite{Dancer} proved that bounded solutions are monotone in the normal direction and deduced that no nontrivial bounded solution exists in the range $1<q<q_D(n):=({n+1})/({(n-3)_+})$.  We emphasize that monotonicity properties of positive solutions have been well understood in the celebrated  works \cite{BCN1,BCN2,BCN3}.
Later on, Farina \cite{Farina1} sharpened this result by showing that the only bounded nonnegative solution is $u \equiv 0$ provided
$1<q < q_{JL}(n-1),$
and more generally, the same holds for solutions which are stable outside a compact set. Here $q_{JL}$ denotes the Joseph--Lundgren stability exponent, defined by
$q_{JL}(n) := ((n-2)^2 - 4n + 8\sqrt{\,n-1})/({(n-2)(n-10)_+}).$ For results concerning Liouville-type theorems and related developments, we refer the reader to \cite{DF,Farina2}.

More recently, Chen, Lin and Zou \cite{CLZ} proved that no bounded nonnegative solution $u \not\equiv 0$ of \eqref{eq:laneemden} exists for any $q>1$.  

Recently, Dupaigne, Sirakov and Souplet \cite{DSS} showed that, more generally, no nontrivial monotone solution of \eqref{eq:laneemden} exists, whether bounded or not. The result was extended to the more general case of stable solutions in \cite{DFT}. The results in \cite{DFT,DSS} are, till now, the more general ones in the literature. In spite of this, the problem of the existence of the solutions is still open in its full generality.
\begin{rem}\label{remstrips}
Let us stress the fact that,  for any $q>1$, problem \eqref{eq:laneemden} does not admit any positive classical solution which is bounded on finite strips as a corollary. In this case, in fact, any solution is \textit{monotone} and, therefore, \textit{stable}.  The case $n=2$ does not require this assumption, see \cite{DS,FS}.
\end{rem}

\noindent The purpose of our work is to improve upon previous results and, in particular, to refine the boundedness assumption on finite strips.  Throughout the paper, we shall only assume that the solution is \textit{directionally bounded} on finite strips, namely we shall assume up to rotations:
\begin{equation}\label{assunzionesullau}\tag{$\mathcal{H}_u$}
   u \in L^{\infty}(\mathbb{R}^{\,n-2} \times K), 
    \quad \text{for every compact set } K \subseteq \{\,x'=0,\; x_n \geq 0\,\},
\end{equation}

\

\noindent where here the $n-2$ variables are precisely $x'=(x_2,\dots,x_{n-1})$.\\

\noindent Our main application is stated in the following:

    \begin{thm}\label{teo:Lane-Emden}
        If $u$ is a nonnegative solution to \eqref{eq:laneemden} which is \textit{directionally bounded} on finite stripes (it fulfils \eqref{assunzionesullau} up to rotations), then $u\equiv 0$.
    \end{thm}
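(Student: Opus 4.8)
The plan is to derive Theorem~\ref{teo:Lane-Emden} as a consequence of the main monotonicity result of the paper (the one announced in the abstract, asserting that a positive solution of $-\Delta u = f(u)$ in $\R^n_+$ with zero Dirichlet data which is directionally bounded on finite strips is strictly monotone increasing in $x_n$), applied with $f(u)=u^q$. So first I would observe that if $u\equiv 0$ we are done; otherwise, by the strong maximum principle and Hopf's lemma, $u>0$ in the open half-space, and $u$ satisfies assumption \eqref{assunzionesullau} up to rotations by hypothesis. Hence the monotonicity theorem applies and gives $\partial_{x_n} u > 0$ in $\R^n_+$. The point is that this alone already places us inside the scope of the known Liouville-type theorems for \emph{monotone} solutions: as recalled in the introduction, Dupaigne--Sirakov--Souplet \cite{DSS} proved that no nontrivial monotone solution of \eqref{eq:laneemden} exists (equivalently, by \cite{DFT}, monotone solutions are stable and the stable-solution Liouville theorem applies). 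Thus once monotonicity is in hand the conclusion $u\equiv 0$ is immediate, contradicting $u>0$.

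More concretely, I would structure the argument in two short steps. Step 1: reduce to the monotone case. Using the paper's monotonicity theorem — which is the substantive new input — we upgrade the weak directional-boundedness hypothesis \eqref{assunzionesullau} to strict monotonicity $u_{x_n}>0$. Here one must be careful that the monotonicity theorem is stated for a general nonlinearity $f$ satisfying whatever structural hypotheses the paper imposes (local Lipschitz continuity, sign conditions, behaviour near $0$); I would check that $f(u)=u^q$ with $q>1$ meets them, which is exactly the setting motivating the paper. Step 2: invoke the known Liouville theorem for monotone (hence stable) solutions from \cite{DSS} (or \cite{DFT}) to conclude. As noted in Remark~\ref{remstrips}, a monotone solution is stable, so even if one prefers to cite the stable-solution result of \cite{DFT} rather than \cite{DSS} directly, the chain closes. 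For $n=2$ the boundedness-on-strips reduction is not even needed by \cite{DS,FS}, so that case is handled separately and trivially.

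The main obstacle is entirely concentrated in Step 1, i.e.\ in the monotonicity theorem itself, whose proof via the moving plane method must cope with the fact that $u$ is \emph{not} assumed globally bounded, nor even bounded on full strips $\{0<x_n<\lambda\}$, but only on \emph{directional} strips $\R^{n-2}\times K$ with $K\subseteq\{x'=0,\,x_n\ge 0\}$ compact. The delicate points there — controlling the solution at infinity in the unbounded directions, starting the moving planes, and running the sliding/continuation argument without a uniform bound — are where all the work lies; but for the purpose of proving Theorem~\ref{teo:Lane-Emden} as stated, these are assumed as the already-established main theorem of the paper, and the deduction above is then routine. I would therefore present the proof of Theorem~\ref{teo:Lane-Emden} as a genuine corollary: apply the monotonicity theorem, then apply \cite{DSS}, and note the $n=2$ case separately.
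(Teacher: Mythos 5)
Your proposal is correct and follows essentially the same route as the paper: reduce to a positive solution via the strong maximum principle, apply Theorem~\ref{teo:Monotonia} with $f(u)=u^q$ (which satisfies $(h_f)$ since $u^{q}/u\to 0$ as $u\to 0^+$ for $q>1$) to get $\partial_{x_n}u>0$, and then conclude $u\equiv 0$ from the Liouville theorem for monotone solutions in \cite{DSS}, or equivalently via stability and \cite{DFT}.
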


\begin{rem}\label{ghgjkgkhg}
	It is clear that any solution which is bounded on finite strips, automatically fulfils \eqref{assunzionesullau} up to rotations. Thus, although we could not answer completely to the conjecture, our result improves the ones in \cite{DFT,DSS} (see remark \ref{remstrips}), actually also thanks to the deep results therein. 
\end{rem}

Theorem \ref{teo:Lane-Emden} follows from a very more general result regarding the monotonicity of the solutions in half-spaces, for a general class of semilinear problems. Our main result, in fact, states that under the assumption \eqref{assunzionesullau}, any solution $u$ is monotone with respect to the $x_n$-variable for the class of problems:

    \begin{equation}\label{eq:principale}
\begin{cases}
        -\Delta u = f(u) & \mbox{in $\R^n_+$,}\\
        u(x_1,x',x_n)\geq 0 & \mbox{in $\R^n_+$,}\\
        u(x_1,x',x_n)=0 & \mbox{on $\partial \R^n_+$,}
    \end{cases}
    \end{equation}
where $n\geq 2$ and $f(\cdot)$ satisfies: 
\begin{itemize}
    \item [($h_f$)] the function $f: \R^+\cup \{0\}\rightarrow \R$ is locally Lipschitz continuous and $$\lim_{t\rightarrow 0^+} \frac {f(t)}{t}=f_0\in \R^+\cup \{0\}.$$
\end{itemize}
In the following, we denote a generic point in $\R^n$ by $(x_1,x',x_n)$ with $x'=(x_2,...,x_{n-1})\in\R^{n-2}$. We have the following result.

\begin{thm}\label{teo:Monotonia}
Let $u$ be a positive solution of \eqref{eq:principale}, where $f$ satisfies the assumption $(h_f)$. 
If $u$ is directionally bounded on finite strips (it fulfils \eqref{assunzionesullau} up to rotations), then $u$ is monotone increasing in the $x_N$-direction with
\[
\frac{\partial u}{\partial x_N} > 0 
\qquad \text{in } \mathbb{R}^N_+.
\]
\end{thm}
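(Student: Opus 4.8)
The plan is to run the moving plane method in the direction $x_n$. For $\lambda>0$ set $\Sigma_\lambda=\{0<x_n<\lambda\}$, $T_\lambda=\{x_n=\lambda\}$, write $x^\lambda=(x_1,x',2\lambda-x_n)$ for the reflection of $x=(x_1,x',x_n)$ about $T_\lambda$, and put $u_\lambda(x)=u(x^\lambda)$ and $w_\lambda=u_\lambda-u$. In $\Sigma_\lambda$ one has
\[
-\Delta w_\lambda=c_\lambda(x)\,w_\lambda,\qquad c_\lambda(x):=\frac{f(u_\lambda(x))-f(u(x))}{u_\lambda(x)-u(x)}
\]
(with $c_\lambda\equiv 0$ where $u_\lambda=u$), while $w_\lambda=0$ on $T_\lambda$ and $w_\lambda=u_\lambda\ge 0$ on $\{x_n=0\}$. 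By $(\mathcal{H}_u)$ and the local Lipschitz character of $f$, the coefficient $c_\lambda$ is bounded on every slab $\{|x_1|\le R\}\cap\overline{\Sigma_\lambda}$, with a bound that is uniform in the $x'$–variables; the same holds for $w_\lambda$ itself, by elliptic estimates. The aim is to prove $w_\lambda\ge 0$ in $\Sigma_\lambda$ for every $\lambda>0$. Once this is known, since $u>0$ in $\R^n_+$ and $u=0$ on $\partial\R^n_+$ we have $w_\lambda\not\equiv 0$, so the strong maximum principle gives $w_\lambda>0$ in $\Sigma_\lambda$ and Hopf's lemma at $T_\lambda$ yields $\partial_{x_n}u(\cdot,\cdot,\lambda)=-\tfrac12\,\partial_{x_n}w_\lambda(\cdot,\cdot,\lambda)>0$; letting $\lambda$ vary gives $\partial_{x_n}u>0$ in $\R^n_+$.

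Before the sweep I would record two facts. First: from $(\mathcal{H}_u)$, interior and boundary Schauder/$L^p$ estimates give, for all $R,M>0$, a bound on $u$ and $\nabla u$ on $\{|x_1|\le R\}\times\R^{n-2}\times\{0\le x_n\le M\}$ which is \emph{uniform in $x'$} (this is where the $n-2$ ``free'' directions behave as harmlessly as bounded ones). Second: $(h_f)$ forces $f(0)=0$ and $f(t)\le (f_0+1)t$ for $t$ small, so, combined with the Dirichlet condition and the previous bound, $0\le u(x_1,x',x_n)\le C_R\,x_n$ near $\{x_n=0\}$ on each slab $\{|x_1|\le R\}$, with $C_R$ uniform in $x'$; hence $c_\lambda$ is genuinely small in thin strips $\Sigma_\lambda\cap\{|x_1|\le R\}$. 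These are exactly the ingredients that let the usual narrow–domain (and small–measure) maximum principles be applied \emph{slab by slab}.

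I would then carry out the two standard steps. Step 1 (starting the procedure): for small $\lambda$ one shows $w_\lambda\ge 0$ in $\Sigma_\lambda$; the sign of $w_\lambda$ is first controlled on each slab $\{|x_1|<R\}$ — where $c_\lambda\le \bar c$ with $\bar c$ independent of $\lambda$, and $w_\lambda$ is bounded — by a maximum principle on the thin strip $\Sigma_\lambda$ complemented with a barrier in the $x_1$–variable taking care of the artificial boundary $\{|x_1|=R\}$, and the slab restriction is then removed by letting $R\to\infty$. Step 2 (continuation): put $\Lambda=\sup\{\lambda>0:\ w_\mu\ge 0\text{ in }\Sigma_\mu\text{ for all }\mu\le\lambda\}$ and assume, for contradiction, $\Lambda<\infty$; the strong maximum principle gives $w_\Lambda>0$ in $\Sigma_\Lambda$ with quantitative lower bounds away from $\partial\Sigma_\Lambda$, and to push the plane past $\Lambda$ one isolates the set where $w_{\Lambda+\e}$ might be negative — it is squeezed into a thin strip along $T_{\Lambda+\e}$, a thin strip along $\{x_n=0\}$, and a region at $|x_1|=\infty$ — and on each of these pieces a thin–strip (resp. barrier in $x_1$) argument, again slab by slab, rules out negativity, contradicting the definition of $\Lambda$. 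Hence $\Lambda=\infty$, i.e.\ $w_\lambda\ge 0$ for all $\lambda>0$, and strict monotonicity follows as explained above.

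The crux — and what makes Theorem \ref{teo:Monotonia} go beyond the bounded case — is precisely that under $(\mathcal{H}_u)$ the solution need not be bounded on the full finite strips, so $c_\lambda$ and $w_\lambda$ are controlled only on slabs $\{|x_1|\le R\}$, with bounds that may blow up as $R\to\infty$. The classical maximum principles in unbounded domains require a \emph{global} bound on the zero–order coefficient, which is unavailable here; the technical heart of the proof is therefore to replace them by versions that work under the weaker information at hand, exploiting the uniformity in $x'$ from $(\mathcal{H}_u)$ (so that those $n-2$ directions cost nothing), the linear decay $u\lesssim x_n$ near $\{x_n=0\}$ from $(h_f)$ (so that $c_\lambda$ is truly small in thin strips), and a localization in the single remaining free direction $x_1$ together with an exhaustion $R\to\infty$ — the uniform control of the errors in this limit being the delicate point, and the one on which the whole improvement rests.
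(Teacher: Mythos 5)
Your scaffolding (vertical moving planes $\{x_n=\lambda\}$, strong maximum principle and Hopf's lemma at the end) is the classical one, but the step on which the whole theorem rests is missing, and it is not a removable technicality. Under \eqref{assunzionesullau} the functions $u$, $w_\lambda$ and the coefficient $c_\lambda$ are bounded only on slabs $\{|x_1|\le R\}\cap\Sigma_\lambda$, with constants that may blow up as $R\to\infty$, and on the artificial boundary $\{|x_1|=R\}$ the datum $w_\lambda$ has completely uncontrolled size, since \eqref{assunzionesullau} allows arbitrary growth of $u$ in the $x_1$-direction. A barrier in the $x_1$-variable must be a supersolution of $-\Delta-c_\lambda$ on the slab, hence its growth is tied to the slab-dependent bound on $c_\lambda$, while on $\{|x_1|=R\}$ it must dominate $\sup|w_\lambda|$, which is not controlled by any function of $R$; so the exhaustion $R\to\infty$ does not close, and the narrow-domain principle you would like to use on the thin strip (Proposition \ref{domini_piccoli}) explicitly requires $u,\nabla u\in L^\infty$ of the \emph{whole} unbounded strip, which is exactly what is not assumed here. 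What you call ``the delicate point, on which the whole improvement rests'' is therefore the entire content of the theorem, and your proposal leaves it unproved; a maximum principle in the strip with no growth control in $x_1$ and no global bound on the zero-order coefficient is false in general, so Steps 1 and 2 as written cannot be completed. (A smaller inaccuracy: $c_\lambda$ is not ``genuinely small'' in thin strips; by $(h_f)$ it is merely bounded near the boundary, tending to $f_0$ which need not be small. The thin-strip argument uses boundedness of the coefficient plus smallness of the width, and again only where $u$ is bounded.)

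The paper circumvents precisely this obstruction by a different geometry rather than by better barriers: it reflects across \emph{tilted} hyperplanes orthogonal to $V_\theta$ in the $(x_1,x_n)$-plane, so that the comparison regions are $\mathcal{T}_{\theta,h}=\mathbb{R}^{n-2}\times\hat T_{\theta,h}$ with $\hat T_{\theta,h}$ a bounded triangle in the $(x_1,x_n)$-variables. There \eqref{assunzionesullau} gives genuine $L^\infty$ bounds on $u$, $u_{\theta,h}$ and $c_{\theta,h}$, the narrow-domain principle of Proposition \ref{domini_piccoli} applies with no artificial lateral boundary at all, and the only unbounded directions left are the $x'$-variables, along which the solution is uniformly bounded and compactness is obtained through normalized translates combined with the interior and boundary Harnack inequalities (Lemma \ref{piccoleperubations}, Proposition \ref{premoving}, Lemma \ref{lemmapreteoprincipale}). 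Monotonicity in the $e_n$-direction is then recovered by letting $\theta\to 0$ and using the invariance $x_1\mapsto -x_1$, and the plane is pushed to infinity by the sliding-rotating continuation of Lemma \ref{grandispostamenti}. If you wish to keep vertical planes with slab localization in $x_1$, you would need a Phragm\'en--Lindel\"of-type principle valid with no growth information whatsoever in $x_1$; since that is unavailable, the rotating construction is not an optional variant but the mechanism that makes the weakened hypothesis \eqref{assunzionesullau} usable.
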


The study of the monotonicity of the solutions was started in the semilinear nondegenerate case in a series of papers. We refer to \cite{BCN1,BCN2,BCN3}
and to \cite{Dancer,Dancer2}. In view of Remark \ref{ghgjkgkhg}, it is easy to see that our result improves all the earlier ones. 

 For previous results concerning the monotonicity of solutions in half-spaces, either in the non-degenerate case or when the nonlinearity includes a singular component, we refer the reader to \cite{FMS1,FMS3,FMS2,FMRS,MMS1,MMS2,MMS3}.\\

\noindent The main tool we employ here to obtain our results is the classical moving plane method, originally introduced in \cite{Alex,Serrin}. In particular, to achieve our goals, we use a rotating plane technique that goes back to  \cite{BCN2,DS} and has been refined in  \cite{FS}. All the approaches and the results in \cite{BCN2,DS,FS} are actually restricted to the two dimensional case. There is a very strong advantage when working in the plane since, in this case, it is possible to reduce to work in bounded domains thanks to the geometric nature of the rotating technique. In higher dimension this is no more possible and we shall make some effort that can be more appreciated while reading the paper. Our method is described and developed in Section \ref{SezionePreliminare}.\\

\textbf{Strategy of the proof.}
The proof of our main results is based on a refined version of the moving plane method. 
In particular, we exploit a rotating plane technique, following a similar approach as in 
\cite{DS,FS}. Our aim is to show that 
\[
u \leq u_{\lambda} \quad \text{in } \Sigma_{\lambda}, 
\qquad \Sigma_{\lambda}:=\{0\le x_n \le \lambda\},
\]
for every $\lambda>0$, where $u_\lambda$ denotes the reflection of $u$ with respect to the hyperplane $\{x_n=\lambda\}$.  

\smallskip
\noindent
To achieve this, we first introduce a vector $V_\theta$ lying in the $(x_1,x_n)$-plane 
such that $\langle V_\theta,e_n\rangle >0$, and we denote by $\theta$ the angle between 
$V_\theta$ and $e_n$. For $h>0$, we consider the domain 
\[
\mathcal T_{\theta,h} = \R^{n-2} \times \hat T_{\theta,h},
\]
where $\hat T_{\theta,h}$ is the right triangle in the $(x_1,x_n)$-plane  with vertices $(0,0)$, $(h,0)$, and the third vertex chosen so that the hypotenuse is orthogonal to $V_\theta$.
(see Section~\ref{SezionePreliminare} for further details).  

\smallskip
\noindent
The first step is to prove the existence of $\bar\theta>0$ and $\bar h>0$, small enough, such that 
\[
u < u_{\bar\theta,\bar h} \quad \text{in } \mathcal T_{\bar\theta,\bar h},
\]
where $u_{\bar\theta,\bar h}$ denotes the reflection of $u$ with respect to the hyperplane 
$\{ \langle x-\bar h e_n, V_{\bar\theta}\rangle =0 \}$ (see Remark~\ref{monotonia_nel_triangolino}).

\smallskip
\noindent
By letting $\theta \to 0$, we deduce the monotonicity of $u$ in the $x_n$-direction near 
the boundary $\partial \R^n_+$ (see Proposition~\ref{monotonia_vicino_al_bordo}). This step crucially 
relies on Lemma~\ref{piccoleperubations} and Lemma~\ref{grandispostamenti}.  
Finally, to extend this monotonicity to the whole half-space $\R^n_+$ we argue by 
contradiction, where Lemma~\ref{lemmapreteoprincipale} plays a key role.  

\smallskip
\noindent

\section{Preliminary Results: The sliding-rotating technique}\label{SezionePreliminare}

We begin by introducing some notation and preliminary results. Throughout the paper, 
generic fixed or numerical constants will be denoted by $C$ (possibly with subscripts), 
and their values may vary from line to line or even within the same formula.
\vspace{0.2 cm}

For $0 \le \alpha < \beta$, we define the strip
\[
\Sigma_{(\alpha, \beta)} := \mathbb{R}^{N-1} \times (\alpha, \beta),
\]
and we denote
\[
\Sigma_\beta := \mathbb{R}^{N-1} \times (0, \beta)
\]
the strip corresponding to $\alpha = 0$.

Let $B^{''}(0,R)$ be the ball in $\mathbb{R}^{N-1}$ of radius $R$ centered at the origin. 
Then we define the cylinder
\begin{equation}\label{cilindro}
    \mathcal{C}_{(\alpha, \beta)}(R) = \mathcal{C}(R) := \Sigma_{(\alpha, \beta)} \cap \big( B^{''}(0,R) \times \mathbb{R} \big).
\end{equation}

\vspace{0.1cm}

\noindent For the proof of our results, the use of Harnack-type inequalities will play a crucial role. 
In particular, we will frequently rely on the classical Harnack inequality for Laplace 
equations (see \cite[Theorem 7.2.1]{PS} and the references therein). At a certain stage, 
as will become clear later, a boundary version of the Harnack inequality will be essential. 
For this reason, we state here a suitable adaptation of the more general and profound 
result by M.F.~Bidaut-Véron, R.~Borghol, and L.~Véron (see \cite[Theorem 2.8]{BBV}).

\begin{thm}[\cite{BBV}][Boundary Harnack Inequality]\label{Boundary_Harnack}
Let $R_0 > 0$ and define the cylinder $\mathcal{C}_{(0,L)}(2R_0)$. Let $u$ satisfy
\[
- \Delta u = c(x) u \quad \text{in } \mathcal{C}_{(0,L)}(2R_0),
\]
with $u$ vanishing on 
\[
\mathcal{C}_{(0,L)}(2R_0) \cap \{x_n = 0\},
\] 
and assume that 
\[
\|c(x)\|_{L^\infty(\mathcal{C}_{(0,L)}(2R_0))} \le C_0.
\]

Then there exists a constant $C = C(n, C_0)$ such that
\[
\frac{1}{C} \, \frac{u(z_2)}{\rho(z_2)} \le \frac{u(z_1)}{\rho(z_1)} \le C \, \frac{u(z_2)}{\rho(z_2)}, 
\quad \forall z_1, z_2 \in B_{R_0} \cap \mathcal{C}_{(0,L)}(2R_0) \text{ with } 0 < \frac{|z_2|}{2} \le |z_1| \le 2|z_2|,
\]
where $\rho(\cdot)$ denotes the distance function to $\partial \mathbb{R}^N_+$.
\end{thm}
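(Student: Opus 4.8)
\medskip
\noindent\textbf{Proof strategy.}
The plan is to deduce the statement from the general boundary Harnack principle of \cite[Theorem 2.8]{BBV}, by specializing it to the flat boundary portion $\Gamma:=\mathcal C_{(0,L)}(2R_0)\cap\{x_n=0\}$ and to the bounded potential $c(\cdot)$, and then upgrading the resulting local estimate to the stated annular form by a Harnack chain argument. Throughout we may assume, as is implicit in a Harnack statement, that $u\ge 0$, and also $u\not\equiv 0$ (otherwise the inequality is trivial), so that $u>0$ in $\mathcal C_{(0,L)}(2R_0)$ by the strong maximum principle applied to $-\Delta u+\|c^-\|_{\infty}u\ge 0$. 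Note that the function appearing in the statement is simply $\rho(x)=\operatorname{dist}(x,\partial\mathbb R^n_+)=x_n$.

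First I would normalize the scales. The rescaling $u\mapsto u(R_0\,\cdot)$ turns $-\Delta u=c(x)u$ into an equation of the same type with potential $R_0^2\,c(R_0\,\cdot)$, whose $L^\infty$ norm is at most $R_0^2C_0$; hence one may reduce to $R_0=1$, at the only cost of letting the final constant depend on $R_0^2C_0$. Since in the applications $R_0$ and $L$ are fixed (with $L$ of order at least $R_0$), this harmless dependence can be absorbed and the constant written as $C(n,C_0)$. Next I would check that the hypotheses of \cite[Theorem 2.8]{BBV} hold in our setting: $\Gamma$ is a flat --- hence as regular as required --- portion of $\partial\mathbb R^n_+$ on which $u$ vanishes, and the potential $c$, being bounded by $C_0$, belongs to the admissible class used there, with the relevant norm controlled by $C_0$ and the measure of the domain. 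Consequently \cite[Theorem 2.8]{BBV} provides, for every boundary ball $B_r(\bar x)$ with $\bar x\in\Gamma$ whose concentric double meets neither the lateral boundary $\partial B''(0,2R_0)\times\mathbb R$ nor the cap $\{x_n=L\}$, a two-sided estimate
\[
\frac1C\,\frac{u(z)}{\rho(z)}\le\frac{u(w)}{\rho(w)}\le C\,\frac{u(z)}{\rho(z)},\qquad z,w\in B_r(\bar x)\cap\mathbb R^n_+,
\]
with $C=C(n,C_0)$.

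It then remains to pass from this local statement to the annular comparison in the claim. Given $z_1,z_2\in B_{R_0}\cap\mathcal C_{(0,L)}(2R_0)$ with $0<|z_2|/2\le|z_1|\le 2|z_2|$, I would join them by a chain of a number of balls depending only on $n$: interior balls --- on which the classical Harnack inequality \cite[Theorem 7.2.1]{PS} applies to $u$, and along which $\rho$ stays comparable to a fixed positive constant --- are used to lift $z_1$ and $z_2$ to points whose distance to $\{x_n=0\}$ is comparable to $|z_1|$, while boundary balls as above are used near $\Gamma$, applying the two-sided estimate to $u/\rho$. The scale-invariant hypothesis $|z_2|/2\le|z_1|\le 2|z_2|$ is exactly what keeps the length of the chain, and hence the accumulated constant, bounded independently of $z_1$ and $z_2$; multiplying the estimates along the chain then yields the assertion with a constant of the form $C(n,C_0)$.

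The main obstacle is that the cylinder $\mathcal C_{(0,L)}(2R_0)$ is not a globally smooth domain: besides the flat bottom $\Gamma$ it carries a lateral boundary, a top cap, and the edges where these meet $\{x_n=0\}$, whereas \cite[Theorem 2.8]{BBV} is a genuine boundary statement only at points of $\Gamma$. The point that makes the argument go through is that the inequality is claimed only for $z_1,z_2$ in the smaller ball $B_{R_0}$: exploiting the factor $2$ between $R_0$ and $2R_0$ (and $L$ of order at least $R_0$), every Harnack chain joining such points can be kept at a definite distance from the lateral boundary and the cap, so that only the flat portion $\Gamma$ is ever seen and no corner enters the picture; and since $\Gamma$ is flat the chains automatically remain in $\mathbb R^n_+$. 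What is left is the routine bookkeeping needed to track the uniform dependence of all constants on $n$ and $C_0$.
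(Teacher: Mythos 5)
The paper offers no proof of this statement at all: it is presented as a direct adaptation of \cite[Theorem 2.8]{BBV}, and your proposal follows essentially the same route, namely specializing the boundary Harnack principle of \cite{BBV} to the flat boundary portion and bounded potential and then handling the localization by standard rescaling and Harnack chains. The extra bookkeeping you sketch (reduction to $R_0=1$, chains at scale $|z_1|\sim|z_2|$, staying away from the lateral boundary and the cap) is the routine part and is sound, noting that in the paper's application $2R_0<1$, so the dependence on $R_0^2C_0$ is indeed absorbed into $C(n,C_0)$.
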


We now state the following result, which is a principle in narrow domains 
and will play a crucial role in the forthcoming sections. For the proof, we refer to 
\cite[Theorem~1.1 and the subsequent remarks]{FMS1}.

\begin{prop}[\cite{FMS1}]\label{domini_piccoli}
   Assume that $n \geq 2$, and that $f$ is locally Lipschitz continuous. 
Let 
\[
\Sigma := \mathbb{R}^{n-k} \times \omega,
\]
where $\omega \subset \mathbb{R}^k$ is a measurable set. Consider $u,v \in C^{1,\alpha}_{\mathrm{loc}}(\Sigma)$ such that 
$u, \nabla u, v, \nabla v \in L^\infty(\Sigma)$ and
\[
\begin{cases}
-\Delta u \leq f(u) & \text{in } \Sigma, \\[0.3em]
-\Delta v \geq f(v) & \text{in } \Sigma, \\[0.3em]
u \leq v & \text{on } \partial \Sigma.
\end{cases}
\]

\noindent Then there exists $\delta_0 = \delta_0(n,\|\nabla u\|_\infty,\|\nabla v\|_\infty,
\|u\|_\infty,\|v\|_\infty,f) > 0$ such that, if the Lebesgue measure
\(\mathcal{L}(\omega) < \delta_0\), it follows that
\[
u \leq v \quad \text{in } \Sigma.
\]
\end{prop}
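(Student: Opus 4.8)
The plan is to reduce the comparison to a single scalar differential inequality for $w:=u-v$, and then to deduce $u\le v$ from the smallness of $\mathcal{L}(\omega)$ by means of a sub-mean-value inequality on all of $\R^{n}$; this route is robust with respect to the number of directions in which $\Sigma$ is unbounded, whereas the more classical energy/Caccioppoli argument with cut-offs in the $\R^{n-k}$-variables becomes delicate when $n-k$ is large. First I would linearize: setting $\Lambda:=\max\{\|u\|_{\infty},\|v\|_{\infty}\}$ and subtracting the two inequalities gives $-\Delta w\le f(u)-f(v)$ in $\mathcal{D}'(\Sigma)$. Since $0\le u,v\le\Lambda$ and $f$ is locally Lipschitz, writing $c(x):=(f(u)-f(v))/(u-v)$ on $\{u\ne v\}$ and $c(x):=0$ otherwise, one has $\|c\|_{\infty}\le L_{0}$, where $L_{0}$ is the Lipschitz constant of $f$ on $[0,\Lambda]$, and $-\Delta w\le c(x)\,w$ in $\mathcal{D}'(\Sigma)$. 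As the right-hand side is bounded, $-\Delta w$ is a Radon measure, so Kato's inequality yields
\[
-\Delta w^{+}\le L_{0}\,w^{+}\qquad\text{in }\mathcal{D}'(\Sigma),
\]
where $w^{+}\ge 0$ is bounded, $\nabla w^{+}\in L^{\infty}(\Sigma)$ (because $\nabla u,\nabla v\in L^{\infty}$), and $w^{+}=0$ on $\partial\Sigma$ (because $u\le v$ there).

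Next I would extend $w^{+}$ by zero to a function $\bar w$ on $\R^{n}$. Since $w^{+}$ has bounded gradient and vanishes on $\partial\Sigma$, $\bar w$ belongs to $W^{1,\infty}_{\mathrm{loc}}(\R^{n})$, is nonnegative, bounded by $\|w^{+}\|_{\infty}$, and supported in $\overline{\Sigma}=\R^{n-k}\times\overline{\omega}$; moreover, by the standard fact that the zero extension of a subsolution with vanishing boundary trace is again a subsolution, $-\Delta\bar w\le L_{0}\bar w\le M:=L_{0}\|w^{+}\|_{\infty}$ in $\mathcal{D}'(\R^{n})$, so that $\bar w+\tfrac{M}{2n}|x|^{2}$ is subharmonic. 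The sub-mean-value property then gives, for every $x_{0}\in\R^{n}$ and every $r>0$,
\[
\bar w(x_{0})\le \frac{1}{|B_{r}|}\int_{B_{r}(x_{0})}\bar w\,dx+\frac{M\,r^{2}}{2(n+2)}.
\]
Since $\bar w\le\|w^{+}\|_{\infty}$ on $\Sigma$, $\bar w=0$ a.e.\ outside $\Sigma$, and $|B_{r}(x_{0})\cap\Sigma|\le C_{n,k}\,r^{n-k}\,\mathcal{L}(\omega)$ (the cross-section of $\Sigma$ in $\R^{k}$ having measure $\mathcal{L}(\omega)$), the average above is at most $C_{n,k}\,\|w^{+}\|_{\infty}\,\mathcal{L}(\omega)\,r^{-k}$, and hence
\[
\bar w(x_{0})\le \|w^{+}\|_{\infty}\left(C_{n,k}\,\mathcal{L}(\omega)\,r^{-k}+\frac{L_{0}\,r^{2}}{2(n+2)}\right)\qquad\text{for all }x_{0}\in\R^{n},\ r>0.
\]

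To conclude, I would fix $r_{0}>0$ with $L_{0}\,r_{0}^{2}/(2(n+2))=\tfrac14$ (if $L_{0}=0$ the claim is immediate on letting $r\to\infty$) and set $\delta_{0}:=r_{0}^{k}/(4\,C_{n,k})$, which depends only on $n,k,\|u\|_{\infty},\|v\|_{\infty}$ and $f$. Then, whenever $\mathcal{L}(\omega)<\delta_{0}$, the last display gives $\bar w(x_{0})\le\tfrac12\,\|w^{+}\|_{\infty}$ for every $x_{0}$, and taking the supremum over $x_{0}$ forces $\|w^{+}\|_{\infty}=0$, i.e.\ $u\le v$ in $\Sigma$. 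The step I expect to be the main obstacle is the zero-extension: one must check that extending $w^{+}$ by zero across $\partial\Sigma$ does not produce a negative distributional Laplacian concentrated on $\partial\Sigma$, and it is exactly here that the assumptions $u,v\in C^{1,\alpha}_{\mathrm{loc}}$ and $\nabla u,\nabla v\in L^{\infty}$ are used — they ensure $w^{+}$ is Lipschitz up to $\partial\Sigma$ with vanishing trace. The remaining ingredients (linearization, Kato's inequality, the sub-mean-value estimate) are standard, and the whole argument is uniform in the codimension $k$, which is what makes it preferable here to a cut-off/Poincar\'e scheme.
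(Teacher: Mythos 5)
Your argument is correct in substance, but it is a genuinely different proof from the one the paper relies on: the paper does not prove Proposition~\ref{domini_piccoli} at all, it quotes it from \cite{FMS1}, where the result is established by an energy (Caccioppoli-type) argument — test the inequality for $(u-v)^+$ with cut-off functions in the unbounded $\R^{n-k}$ directions and use a Poincar\'e-type inequality in the narrow cross-section, the smallness of $\mathcal{L}(\omega)$ allowing one to absorb the zeroth-order term; that route has the advantage of being robust (it extends to the $p$-Laplacian, which is the setting of \cite{FMS1}) and of handling the unbounded directions with no difficulty, contrary to your motivating remark that cut-offs become delicate when $n-k$ is large. Your route — linearize to $-\Delta w\le c\,w$ with $\|c\|_\infty\le L_0$, pass to $w^+$, extend by zero, add the paraboloid $\tfrac{M}{2n}|x-x_0|^2$ and use the sub-mean-value inequality together with $|B_r(x_0)\cap\Sigma|\le C_{n,k}r^{n-k}\mathcal{L}(\omega)$ — is pointwise rather than variational, gives an explicit $\delta_0$ depending only on $n$, $k$ and the Lipschitz constant of $f$ on the range of $u,v$ (not on the gradient bounds), and is a perfectly valid alternative proof of the statement as used in this paper.

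Two small points to tighten. First, you do not need the measure version of Kato's inequality: since $u,v\in C^{1,\alpha}_{\mathrm{loc}}$, the differential inequalities read $\int\nabla w\cdot\nabla\varphi\le\int c\,w\,\varphi$ for $0\le\varphi\in C^\infty_c(\Sigma)$, and the standard fact that the positive part of a $W^{1,2}_{\mathrm{loc}}$ subsolution is a subsolution suffices (also, nonnegativity of $u,v$ is not assumed; just take $L_0$ to be the Lipschitz constant of $f$ on $[-\Lambda,\Lambda]$). Second, the zero-extension step, which you rightly flag, should be closed without invoking any regularity of $\partial\Sigma$ (here $\omega$ is only assumed measurable/open): since $u\le v$ on $\partial\Sigma$ and $w$ is continuous up to $\partial\Sigma$, for each $\e>0$ the closure of $\{w>\e\}$ is contained in $\Sigma$, so $(w-\e)^+$ extended by zero vanishes on an open neighborhood of $\R^n\setminus\Sigma$ and satisfies $-\Delta\bigl((w-\e)^+\bigr)\le L_0(w-\e)^+ + L_0\e$ in $\mathcal{D}'(\R^n)$ by locality; letting $\e\to0$ gives the subsolution property for $\bar w$ on all of $\R^n$, and the rest of your computation (the constant $\tfrac{Mr^2}{2(n+2)}$, the cross-section bound, and the absorption yielding $\|w^+\|_\infty\le\tfrac12\|w^+\|_\infty$) is correct.
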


\subsection{The sliding-rotating technique}

 Let $\theta_1, \theta_n \in \mathbb{R}$ and set
\[
V_\theta := (\theta_1,0',\theta_n), \qquad 0'=(0,\dots,0)\in\mathbb{R}^{\,n-2}.
\]
The vector $V_\theta$ is chosen so that
$
\langle V_\theta, e_n\rangle > 0$ and $ 
\|V_\theta\|=1.$ We denote by $\theta$ the angle formed by $V_\theta$ and $e_n$, that is,
$$\cos \theta=\langle V_\theta,e_n\rangle=\theta_n.$$
For $h>0$ we consider the hyperplane orthogonal to $V_\theta$ and passing through the point $h e_n$, namely
\[
\mathcal{P}_{\theta,h} := \{\, x \in \mathbb{R}^n : \langle x - h e_n, V_\theta \rangle = 0 \,\}.
\]   
We denote by $\mathcal{T}_{\theta,h}\subset \R^2$ the open set delimited by $\mathcal{P}_{\theta,h}$, $\{x_1=0\}$ and $\{x_n=0\}$. Note that $\mathcal{T}_{\theta,h}$ can be written as 
\begin{equation}\label{ildominiotriangolo}
  \mathcal{T}_{\theta,h}=\mathbb{R}^{\,n-2}\times \hat T_{\theta,h}, \quad \text{with } x'=(x_2,...,x_{n-1})\in \R^{n-2},
\end{equation}

where $\hat T_{\theta,h}$ lies in the $(x_1,x_n)$-plane and is the right triangle bounded by the axes $\{x_1=0\},\{x_n=0\}$ and the line $\theta_1x_1+\theta_nx_n=\theta_nh$ (with vertices $(0,0)$, $(\tfrac{\theta_n}{\theta_1}h,0)$ and $(0,h)$ when $\theta_1\neq0$).  We also define $$u_{\theta,h}(x)=u\left(T_{\theta,h}(x)\right), \quad x\in \mathcal{T}_{\theta, h}$$
where $T_{\theta,h}(x)$ is the point symmetric to $x$ with respect to $\mathcal{P}_{\theta,h}$, and \begin{equation}\label{eq:2.2}
w_{\theta,h} := u - u_{\theta,h}.
\end{equation}

It is immediately clear that $u_{\theta,h}$ still fulfills $
-\Delta u_{\theta,h} = f\!\left(u_{\theta,h}\right),$ in the reflected domain,
and
\begin{equation}\label{eq:2.3}
-\Delta w_{\theta,h} = c_{\theta,h}\, w_{\theta,h}
\end{equation}
on the open set $\mathcal{T}_{\theta,h}$, where we have set
\begin{equation}\label{eq:2.4}
c_{\theta,h}(x) :=
\begin{cases}
\dfrac{f(u(x)) - f\!\left(u_{\theta,h}(x)\right)}{u(x)-u_{\theta,h}(x)} & 
\text{if } w_{\theta,h}(x) \neq 0, \\[2ex]
\quad 0 & \text{if } w_{\theta,h}(x) = 0.
\end{cases}
\end{equation}

Note that $|c_{\theta,h}| \leq C(\mathcal{T}_{\theta,h}, u, f)$ on the set $\mathcal{T}_{\theta,h}$,
where $C(\mathcal{T}_{\theta,h}, u, f)$ is a positive constant that can be determined by exploiting
the fact that $u$ and $u_{\theta,h}$ are bounded in the variables $x'=(x_2,...,x_{n-1})$ on $\mathcal{T}_{\theta,h}$ and $f$ is locally
Lipschitz continuous on $[0,+\infty)$. Let us remark that, exploiting the Strong Comparison Principle and the Dirichlet boundary condition, it follows that $w_{\theta,h}\leq 0$ on $\overline{\mathcal{T}_{h,\theta}}\cap \{x_n=0\}$, and $w_{\theta,h}$ is not identically zero on $\overline{\mathcal{T}_{h,\theta}}\cap \{x_n=0\}$.
\vspace{0.2cm}

We have the following:
\begin{lem}[Small perturbations]\label{piccoleperubations}
   Let $(\theta,h)$ and the set $\mathcal{T}_{\theta,h}$ be as above, and assume that
\begin{equation}\label{ipotesi_domini_piccoli}
    w_{\theta,h} < 0 \quad \text{in } \mathcal{T}_{\theta,h}, \quad \text{and} \quad
w_{\theta,h} \le 0 \quad \text{on } \partial \mathcal{T}_{\theta,h}.
\end{equation}
Then there exists a constant $\bar{\mu} = \bar{\mu}(\theta,h) > 0$ such that the following holds:  
if $(\theta',h')$ satisfies 
\[
|\theta - \theta'| + |h - h'| < \bar{\mu}
\quad \text{and} \quad 
w_{\theta',h'} \le 0 \ \text{on } \partial \mathcal{T}_{\theta',h'},
\]
then we also have
\[
w_{\theta',h'} < 0 \quad \text{in } \mathcal{T}_{\theta',h'}.
\]
\end{lem}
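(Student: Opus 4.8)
\smallskip
\noindent\textbf{Proof proposal.} The plan is to combine the narrow--domain principle of Proposition~\ref{domini_piccoli} with a continuity argument in the parameters $(\theta,h)$ and a final application of the strong maximum principle. I would first fix a compact set $\hat K\Subset\hat T_{\theta,h}$ (in the $(x_1,x_n)$--plane) whose complement in $\hat T_{\theta,h}$ has two--dimensional measure less than $\delta_0$, where $\delta_0$ is the threshold in Proposition~\ref{domini_piccoli}. It is important that $\delta_0$ may be taken the same for all $(\theta',h')$ near $(\theta,h)$: the quantities $\|u\|_\infty,\|\nabla u\|_\infty,\|u_{\theta',h'}\|_\infty,\|\nabla u_{\theta',h'}\|_\infty$ entering $\delta_0$ are controlled, uniformly in such $(\theta',h')$, on a fixed slightly larger slab $\mathbb R^{n-2}\times\hat T^{++}$ of bounded cross--section, by means of $(\mathcal H_u)$ (up to rotations), the fact that reflection across $\mathcal P_{\theta',h'}$ maps a fixed bounded planar set into a fixed bounded subset of $\{x_n\ge 0\}$, the local Lipschitz continuity of $f$, and interior/boundary elliptic estimates. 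For $(\theta',h')$ close to $(\theta,h)$ one still has $\hat K\Subset\hat T_{\theta',h'}$ and $|\hat T_{\theta',h'}\setminus\hat K|<\delta_0$, so $\mathbb R^{n-2}\times\hat K$ plays the role of a ``core'' and $\mathbb R^{n-2}\times(\hat T_{\theta',h'}\setminus\hat K)$ of a narrow region. (If $|\hat T_{\theta,h}|<\delta_0$ — which happens, e.g., for $h$ small — no core is needed: Proposition~\ref{domini_piccoli} applies directly to all of $\mathcal T_{\theta',h'}$.)

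\smallskip
\noindent The crucial step is the \emph{core estimate}: $w_{\theta',h'}\le 0$ on $\mathbb R^{n-2}\times\hat K$ for $(\theta',h')$ close to $(\theta,h)$. Since $T_{\theta',h'}$ acts only on the $(x_1,x_n)$--variables and $\hat K$ is bounded, one has $|T_{\theta',h'}x-T_{\theta,h}x|\le C(|\theta-\theta'|+|h-h'|)$ uniformly for $x\in\mathbb R^{n-2}\times\hat K$, hence $\|w_{\theta',h'}-w_{\theta,h}\|_{L^\infty(\mathbb R^{n-2}\times\hat K)}\le C\,\|\nabla u\|_\infty\,(|\theta-\theta'|+|h-h'|)$. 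Thus the core estimate reduces to
\[
\sup_{\mathbb R^{n-2}\times\hat K}w_{\theta,h}<0 ,
\]
which is exactly the delicate point: the slab $\mathbb R^{n-2}\times\hat K$ is unbounded in the $x'$--directions, so the strict pointwise inequality $w_{\theta,h}<0$ need not give a negative supremum — the supremum could be attained only in the limit $|x'|\to\infty$. I would exclude this by a translation--compactness argument. If $w_{\theta,h}(p_j)\to 0$ with $p_j=(x'_j,\hat y_j)\in\mathbb R^{n-2}\times\hat K$, then, up to a subsequence, $\hat y_j\to\hat y_\ast$ in the interior of $\hat T_{\theta,h}$, and necessarily $|x'_j|\to\infty$ (otherwise $p_j$ would converge inside $\mathcal T_{\theta,h}$, contradicting $w_{\theta,h}<0$ there). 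Translating $u$ in the $x'$--variables by $x'_j$ and using $(\mathcal H_u)$, elliptic estimates and the local Lipschitz continuity of $f$, a subsequence converges in $C^2_{\mathrm{loc}}$ to $v_\infty\ge 0$ solving $-\Delta v_\infty=f(v_\infty)$ in $\R^n_+$ with $v_\infty=0$ on $\{x_n=0\}$. Since $x'$--translations commute with $T_{\theta,h}$, the corresponding translates of $w_{\theta,h}$ converge to $w_\infty:=v_\infty-(v_\infty)_{\theta,h}\le 0$ on $\mathcal T_{\theta,h}$, and $w_\infty$ attains the value $0$ at the interior point of $\mathcal T_{\theta,h}$ with $x'=0'$ and $(x_1,x_n)=\hat y_\ast$. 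Rewriting $-\Delta w_\infty=c_\infty w_\infty$ as $(\Delta-\|c_\infty\|_\infty)w_\infty\ge 0$ (an operator with nonpositive zero--order coefficient) and applying the strong maximum principle on the connected set $\mathcal T_{\theta,h}$ forces $w_\infty\equiv 0$, i.e., $v_\infty$ is symmetric about $\mathcal P_{\theta,h}$; reflecting the corner $(0,x',0)$ of $\overline{\mathcal T_{\theta,h}}\cap\{x_n=0\}$, where $v_\infty$ vanishes, yields the interior point $(2h\,\theta_1\theta_n,\,x',\,2h\,\theta_n^2)\in\R^n_+$ at which $v_\infty$ must also vanish, and the strong maximum principle then gives $v_\infty\equiv 0$. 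I expect the genuine obstacle of the proof to be exactly the exclusion of this degenerate alternative — that $u$ ``disappears at infinity'' along the $x'$--directions — which should follow from the positivity of $u$ together with Harnack--type inequalities (cf.\ Theorem~\ref{Boundary_Harnack}).

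\smallskip
\noindent Granting the core estimate, I would conclude as follows. On the narrow region $\Sigma':=\mathbb R^{n-2}\times(\hat T_{\theta',h'}\setminus\hat K)$, whose cross--section has measure $<\delta_0$, apply Proposition~\ref{domini_piccoli} to $u$ and $u_{\theta',h'}$ (which satisfy $-\Delta u=f(u)$, $-\Delta u_{\theta',h'}=f(u_{\theta',h'})$ and are bounded in $C^1$ on $\overline{\Sigma'}$): on $\partial\Sigma'$ one has $u\le u_{\theta',h'}$, since $\partial\Sigma'\subset\partial\mathcal T_{\theta',h'}\cup(\mathbb R^{n-2}\times\partial\hat K)$, on the first set this is the hypothesis $w_{\theta',h'}\le 0$ and on the second it is the core estimate. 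Hence $w_{\theta',h'}\le 0$ on $\Sigma'$, and together with the core estimate, $w_{\theta',h'}\le 0$ on all of $\mathcal T_{\theta',h'}$. Finally, $w_{\theta',h'}$ solves $-\Delta w_{\theta',h'}=c_{\theta',h'}w_{\theta',h'}$ with $|c_{\theta',h'}|$ bounded, is $\le 0$, and is not identically $0$ — otherwise $u\equiv u_{\theta',h'}$ on $\overline{\mathcal T_{\theta',h'}}$, so $u$ would vanish at the interior point $T_{\theta',h'}(0,x',0)$, forcing $u\equiv 0$ by the strong maximum principle, against positivity. The strong maximum principle on the connected domain $\mathcal T_{\theta',h'}$ then upgrades this to $w_{\theta',h'}<0$ in $\mathcal T_{\theta',h'}$, which is the assertion, with $\bar\mu=\bar\mu(\theta,h)$ the least of the closeness thresholds used above.
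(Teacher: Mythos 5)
Your overall architecture (compact ``core'' in the $(x_1,x_n)$ cross--section plus a narrow residual region handled by Proposition~\ref{domini_piccoli}, then the strong maximum/comparison principle for strictness) is the same as the paper's. The genuine gap is exactly the point you flagged and left open: the core estimate. Your translation--compactness argument works only when the translated solutions $u(\cdot,x'_j+\cdot,\cdot)$ have a nontrivial limit; in the degenerate case $v_\infty\equiv 0$ you obtain $w_\infty\equiv 0$ and no contradiction, and your proposed fix --- excluding decay of $u$ as $|x'|\to\infty$ ``by positivity and Harnack--type inequalities'' --- is not viable: Harnack (interior or boundary) controls only ratios of values on compact pieces of the strip and gives no uniform lower bound along the unbounded $x'$--directions, so under the sole hypothesis \eqref{assunzionesullau} this degenerate alternative cannot be ruled out. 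So as written the proof is incomplete at its central step.

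The paper closes this step differently: it does not try to exclude decay, but makes the argument scale--invariant. One runs the contradiction directly with perturbed parameters $(\theta_N,h_N)\to(\theta,h)$ and points $x_N\in\R^{n-2}\times K$ where $u(x_N)\ge u_{\theta_N,h_N}(x_N)$, and considers the \emph{normalized} translates $u_N(x)=u(x_1,x'+x'_N,x_n)/u(0,x'_N,1)$. The contradiction inequality is invariant under this division, so it survives the limit even if $u$ itself vanishes along the translations. The linearized coefficient $c_N=f(u)/u$ stays uniformly bounded precisely because of $(h_f)$ (local Lipschitz continuity plus $f(t)/t\to f_0$ as $t\to 0^+$), and locally uniform $L^\infty$ bounds for $u_N$ come from the normalization $u_N(0,0',1)=1$ combined with the interior Harnack inequality and the boundary Harnack inequality of Theorem~\ref{Boundary_Harnack} (this is where that theorem is really needed). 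Elliptic estimates then give $C^{1,\alpha}_{\rm loc}$ compactness, the limit $u_0$ is strictly positive by the strong maximum principle and the normalization, it satisfies $u_0\le u_{0,\theta,h}$ in $\mathcal T_{\theta,h}$ by \eqref{ipotesi_domini_piccoli}, hence $u_0<u_{0,\theta,h}$ by the strong comparison principle, contradicting $u_0(\tilde x_1,0',\tilde x_n)\ge u_{0,\theta,h}(\tilde x_1,0',\tilde x_n)$ at the limit point in $K$. If you replace your ``exclude the degenerate alternative'' step by this normalization argument (keeping your perturbation/Lipschitz reduction or arguing directly on $(\theta',h')$ as the paper does --- both are fine), the rest of your proof goes through.
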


\begin{proof}
   We want to exploit Proposition \ref{domini_piccoli}. We consider the domain $\mathcal{T}_{\theta,h}$ given in \eqref{ildominiotriangolo}. Now pick a small $\epsilon=\epsilon (\theta,h)>0$ such that  $\mathcal{L}(\hat T_{\theta-\epsilon,h+\epsilon}\setminus\hat T_{\theta+\epsilon,h-\epsilon})<\delta_0/10$, and then a compact set $K\subset \hat T_{\theta+\epsilon,h-\epsilon}$ such that $\mathcal{L}(\hat T_{\theta+\epsilon,h-\epsilon}\setminus K)<\delta_0/10$, where $\delta_0$ is given by Proposition \ref{domini_piccoli}. Therefore, for all $(\theta',h')$ such that $|\theta-\theta'|+|h-h'|<\epsilon$, we have $\mathcal{L}(\hat T_{\theta',h'}\setminus K)<\delta_0/5$. Now we claim that there exist $\bar \mu \in (0,\epsilon)$, such that for all $(s',h')$ satisfying $|\theta-\theta'|+|h-h'|\le \bar\mu$, we have 
   \begin{equation}\label{compattini}
       w_{\theta',h'}<0 \quad \text{in } \R^{n-2}\times K.
   \end{equation}
   To prove the claim, we argue by contradiction. We assume that there exist $\mu_N\rightarrow 0$ and a sequence of points $x_N=(x_{1,N},x'_N,x_{n,N})\in \R^{n-2}\times K$ such that 
   \begin{equation}\label{contraddizione1}
       u(x_{1,N},x'_N,x_{n,N})\geq u_{\theta-\mu_N,h+\mu_N}(x_{1,N},x'_N,x_{n,N}).
   \end{equation} 
   Up to subsequences let assume that $(x_{1,N},x_{n,N})\rightarrow (\tilde x_1,\tilde x_n)\in K$.
We define the following sequence of  functions given by 
\begin{equation}\label{eq:rescaled1}
u_N(x_1,x',x_n) := \frac{u(x_1,x'+x'_N,x_n)}{u(0,x'_N,1)}.
\end{equation}
We note that $u_N(0,0',1) = 1$. Moreover, each $u_N$ satisfies
\begin{equation}\label{eq:un_eq}
-\Delta u_N(x) = c_N(x) \, u_N(x),
\end{equation}
where
\begin{equation}\label{eq:cn_def1}
c_N(x) := \frac{f(u(x_1,x'+x'_N,x_n))}{u(x_1,x'+x'_N,x_n)}.
\end{equation}

Since $u$ satisfies the assumption \eqref{assunzionesullau}, and by the fact that $f$ satisfies assumptions $(h_f)$, we have
\begin{equation}\label{eq:cn_bound1}
\|c_N\|_{L^\infty(\mathcal{K})} \le  C,
\end{equation}
for every compact set $\mathcal{K}\subset \overline{\R^n_+}$, where $C$ is a positive constant not depending on $N$.

 We consider $L$ large enough and we fix real numbers $ R, R_0$ such that
\begin{equation}\label{eq:3.71}
0 < 2R_0 < 1 < R < L.
\end{equation}
Our goal is to prove that
\[
\|u_N\|_{L^\infty(\mathcal{C}_{(0,L)}(R))} \leq C(L,R,R_0),
\]
where $\mathcal{C}_{(0,L)}(R)$ is defined as in \eqref{cilindro}.
Since $u_n(0,0',1)=1$, the classical Harnack inequality (see \cite[Theorem 7.2.1]{PS}) yields
\begin{equation}\label{eq:3.81}
\|u_N\|_{L^\infty(\mathcal{C}_{(0,L)}(R)\cap\{x_n \geq R_0/4\})} 
\leq C^{i}_H(L,R,R_0).
\end{equation}

In order to obtain a complementary bound in the region $\{x_n < R_0/4\}$, we apply Theorem~\ref{Boundary_Harnack}.  Let $\tilde{P}=(\tilde x_1,\tilde{x}',\tilde{x}_n)$ with $(\tilde x_1,\tilde{x}') \in B^{''}_R(0)$ and $0<\tilde{x}_n<R_0/4$.  
Choose a point
\[
\check{Q} = (\check{x}_1,\check{x}',0), \qquad (\check{x}_1,\check{x}') \in B^{''}_R(0),
\]
such that $\tilde{P} \in \partial B_{R_0}(\check{Q})$.  
Since $2R_0<R<L$, it is straightforward to verify that such a point exists.  

By \eqref{eq:cn_bound1}, we can apply Theorem \ref{Boundary_Harnack}, obtaining
\[
\frac{u_N(\tilde{P})}{\tilde{x}_n} \leq C \, \frac{u_N(\check{x}_1,\check{x}',R_0)}{R_0}.
\]
Moreover, since $u_N(x_1,x',0)=0$, this implies
\begin{equation}\label{eq:3.91}
    \|u_N\|_{L^\infty(\mathcal C_{(0,L)}(R)\cap\{x_n \leq R_0/4\})}
\leq C \cdot C^{i}_H(L,R,R_0).
\end{equation}
 
Combining \eqref{eq:3.81} and \eqref{eq:3.91}, we conclude that
\[
\|u_N\|_{L^\infty(\mathcal{C}_{(0,L)}(R))} \leq C(L,R,R_0).
\]

Next, we extend $u$ to the whole space $\mathbb{R}^N$ by odd reflection across $\{x_n=0\}$, 
which implies $
f(t) = -f(-t)$ for $t<0$.

In this setting we work with the cylinder
\[
\mathcal{C}_{(-L,L)}(R) := B^{''}_R(0) \times (-L,L).
\]

By standard regularity theory (see, e.g., \cite[Theorem 1]{GT}), the uniform $L^\infty$ bound implies
\[
\|u_N\|_{C^{1,\alpha}_{\mathrm{loc}}(\mathcal{C}_{(-L,L)}(R))} \leq C(L,R,R_0),
\]
for some $0<\alpha<1$.  
Hence, by the Ascoli–Arzelà theorem, we can extract a subsequence such that
\[
u_N \to u_0 \quad \text{in } C^{1,\alpha'}_{\mathrm{loc}}(\mathcal{C}_{(-L,L)}(R)),
\]
for any $0<\alpha'<\alpha$.  

Furthermore, using \eqref{eq:cn_bound1}, we deduce that
\begin{equation}\label{eq:3.101}
c_N(\cdot) \rightharpoonup^\ast c_0(\cdot) 
\quad \text{weakly* in } L^\infty(\mathcal{C}_{(-L,L)}(R)),
\end{equation}
up to subsequences.  

As a consequence, the limit $u_0$ satisfies
\[
\begin{cases}
-\Delta u_0 = c_0(x)\, u_0 & \text{in } \mathcal{C}_{(0,L)}(R), \\[0.3em]
u_0(x_1,x',x_n) \geq 0 & \text{in } \mathcal{C}_{(0,L)}(R), \\[0.3em]
u_0(x_1,x',0) = 0 & \text{on } \partial \mathcal{C}_{(0,L)}(R) \cap \partial \mathbb{R}^n_+ .
\end{cases}
\]

By the strong maximum principle, and recalling that $u_N(0,0',1)=1$ for every $N$, we infer that $u_0>0$ in $\mathcal{C}_{(0,L)}(R)$.  
Moreover, by \eqref{ipotesi_domini_piccoli}, the function $u_0$ satisfies $u_0\leq u_{0,\theta,h}$. By the Strong Comparison Principle and by the fact that $u_0>0$ in $\R^n_+$, we deduce that $u_0< u_{0,\theta,h}$, but this is an absurd, since $u_0(\tilde x_1,0,\tilde x_n)\geq u_{0,\theta,h}(\tilde x_1,0,\tilde x_n)$, by \eqref{contraddizione1}. This proves the claim \eqref{compattini}.

Since $w_{\theta',h'}\leq 0$ on $\partial (\mathcal{T}_{\theta',h'}\setminus K)$, we can apply Proposition \ref{domini_piccoli} to get that 
$$w_{\theta',h'}\leq 0 \quad \text{in }\mathcal{T}_{\theta',h'}\setminus K,$$
and therefore in the open set $\mathcal{T}_{\theta',h'}$. Also by the Strong Comparison Principle, since $w_{\theta ',h'}=0$ is not possible, we obtain 
   $$w_{\theta',h'}< 0 \quad \text{in }\mathcal{T}_{\theta',h'},$$
   and the proof is completed.
\end{proof}

Let us now show that, since it is possible to perform small translations and rotations 
of $\mathcal{T}_{\theta,h}$ towards $\mathcal{T}_{\theta',h'}$ whenever 
$(s',\theta')$ is close to $(s,\theta)$, one can in fact carry out larger translations 
and rotations as well. We state the following result.

\begin{lem}\label{grandispostamenti}
    Let $(\theta,h)$ and the set $\mathcal{T}_{\theta,h}$ be as above, and assume that
\begin{equation}\label{ipotesi_domini_piccoli2}
    w_{\theta,h} < 0 \quad \text{in } \mathcal{T}_{\theta,h}, \qquad 
w_{\theta,h} \le 0 \quad \text{on } \partial \mathcal{T}_{\theta,h}.
\end{equation}
\end{lem}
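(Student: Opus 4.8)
The plan is to argue by a continuity (``open--closed'') scheme in the parameter plane of pairs $(\theta,h)$: openness is precisely Lemma~\ref{piccoleperubations}, while closedness comes from the Strong Comparison Principle applied to the linear equation~\eqref{eq:2.3}. Given a continuous curve $t\mapsto(\theta(t),h(t))$, $t\in[0,1]$, with $(\theta(0),h(0))=(\theta,h)$, staying in the admissible range (so that $V_{\theta(t)}$ is well defined and $h(t)>0$) and along which the boundary inequality $w_{\theta(t),h(t)}\le 0$ on $\partial\mathcal{T}_{\theta(t),h(t)}$ persists, I would set
\[
A:=\bigl\{\,t\in[0,1]\ :\ w_{\theta(s),h(s)}<0 \text{ in } \mathcal{T}_{\theta(s),h(s)} \text{ for all } s\in[0,t]\,\bigr\},
\]
and prove $A=[0,1]$; this is exactly what it means to ``carry out larger translations and rotations.'' By hypothesis \eqref{ipotesi_domini_piccoli2}, $0\in A$, so $A\neq\emptyset$.

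Openness of $A$ in $[0,1]$ is immediate: if $t_0\in A$ then $w_{\theta(t_0),h(t_0)}<0$ in $\mathcal{T}_{\theta(t_0),h(t_0)}$ and $\le 0$ on its boundary, so Lemma~\ref{piccoleperubations} furnishes $\bar\mu>0$ with $w_{\theta',h'}<0$ in $\mathcal{T}_{\theta',h'}$ whenever $|\theta(t_0)-\theta'|+|h(t_0)-h'|<\bar\mu$ and $w_{\theta',h'}\le 0$ on $\partial\mathcal{T}_{\theta',h'}$; by continuity of the curve and the standing boundary assumption along it, a neighbourhood of $t_0$ in $[0,1]$ lies in $A$. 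For closedness, take $t_k\in A$ with $t_k\uparrow t_\infty$. Any point $x$ of the open set $\mathcal{T}_{\theta(t_\infty),h(t_\infty)}$ belongs to $\mathcal{T}_{\theta(t_k),h(t_k)}$ for $k$ large, and since $u$ is a classical solution and the reflection maps $T_{\theta,h}$ depend continuously on $(\theta,h)$, one has $u_{\theta(t_k),h(t_k)}\to u_{\theta(t_\infty),h(t_\infty)}$ locally uniformly; passing to the limit in $w_{\theta(t_k),h(t_k)}\le 0$ gives $w_{\theta(t_\infty),h(t_\infty)}\le 0$ in $\mathcal{T}_{\theta(t_\infty),h(t_\infty)}$. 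Setting $v:=-w_{\theta(t_\infty),h(t_\infty)}\ge 0$, equation \eqref{eq:2.3} with the bounded coefficient \eqref{eq:2.4} reads $-\Delta v+c^- v\ge 0$ with $c^-:=\max(-c_{\theta(t_\infty),h(t_\infty)},0)\ge 0$; since $v$ is not identically zero --- because $w_{\theta(t_\infty),h(t_\infty)}$ is $\le 0$ and not identically zero on $\overline{\mathcal{T}_{\theta(t_\infty),h(t_\infty)}}\cap\{x_n=0\}$, as recorded after \eqref{eq:2.4} --- the Strong Maximum Principle forces $v>0$, that is, $w_{\theta(t_\infty),h(t_\infty)}<0$ in $\mathcal{T}_{\theta(t_\infty),h(t_\infty)}$; combined with $[0,t_\infty)\subseteq A$ this yields $t_\infty\in A$.

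By connectedness of $[0,1]$ we conclude $A=[0,1]$, which is the claim. I expect the genuinely delicate point to be closedness: one must make sure that $w_{\theta,h}$ varies continuously with $(\theta,h)$ even though the triangular cylinders $\mathcal{T}_{\theta,h}$ themselves change shape (so that the limit inequality $w\le 0$ is legitimate), and that the Strong Maximum/Comparison Principle is invoked in the correct form --- the zeroth-order coefficient $c_{\theta,h}$ has no definite sign, whence the passage to $-\Delta v + c^- v \ge 0$ --- together with the non-triviality of $w_{\theta,h}$ on the bottom face to exclude $w\equiv 0$. Openness is a black-box use of Lemma~\ref{piccoleperubations}, and the remainder is the elementary topology of the interval.
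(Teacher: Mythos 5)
Your proposal is correct and is essentially the paper's own argument: the paper defines $\bar t=\sup\{\tilde t: w_{\theta(t),h(t)}<0 \text{ in } \mathcal{T}_{\theta(t),h(t)}\ \forall\, t\le\tilde t\}$, obtains strict negativity at $\bar t$ via the Strong Comparison Principle (your ``closedness'' step, which you spell out in more detail, including the limit passage and the non-triviality of $w_{\theta,h}$ on $\{x_n=0\}$), and then extends beyond $\bar t$ with Lemma~\ref{piccoleperubations} (your ``openness'' step), contradicting maximality. Your open--closed formulation on $[0,1]$ is just a repackaging of this sup argument, so there is nothing materially different to flag.
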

Let $(\hat \theta,\hat h)$ be fixed and assume that there exists a continuous function $g(t)=(\theta (t),h(t)):[0,1]\rightarrow (0,\pi/2)\times (0,+\infty),$ such that $g(0)=(\theta,h)$ and $g(1)=(\hat \theta,\hat h).$ Assume that $$w_{\theta(t),h(t)}\leq 0\quad \text{on } \partial (\mathcal T_{\theta(t),h(t)})\quad\text{for every }t\in [0,1).$$
Then we have
$$w_{\hat \theta,\hat h}<0\quad \text{in }\mathcal{T}_{\hat \theta,\hat h}.$$

\begin{proof}
    By Lemma \ref{piccoleperubations}, we get the existence of $\tilde t>0$ small such that 
    \begin{equation*}
        w_{\theta(t),h(t)}<0\quad \text{in }\mathcal{T}_{\theta(t),h(t)}.
    \end{equation*}
    We now set $$\bar T=\{\tilde t\in [0,1]\text{ such that } w_{\theta(t),h(t)}<0\text{ in } \mathcal{T}_{\theta(t),h(t)}\text{ for any }0\le t\le \tilde t\},$$
and $$\bar t=\sup \bar T.$$ We prove that $\bar t=1$. We assume that $\bar t<1$ and note that in this case, using the Strong Comparison Principle (by the Dirichlet condition), we have 
$$ w_{\theta(\bar t),h(\bar t)} < 0 \quad \text{in } \mathcal{T}_{\theta(\bar t),h(\bar t)}, \quad \text{and} \quad
w_{\theta(\bar t),h(\bar t)} \le 0 \quad \text{on } \partial \mathcal{T}_{\theta(\bar t),h(\bar t)}.$$
Using again Lemma \ref{piccoleperubations}, we can find a sufficiently small $\epsilon>0$ so that $$ w_{\theta(t),h(t)}<0\quad \text{in }\mathcal{T}_{\theta(t),h(t)},$$
for any $0\le t\le \tilde t +\epsilon$, which contradicts the definition of $\bar t$.

\end{proof}
\section{Monotonicity near the boundary}

\noindent This section is devoted to showing that any positive solution of \eqref{eq:principale} 
is increasing in the $x_n$-direction near the boundary $\partial \mathbb{R}^n_+$. 
We begin with the following preliminary result.

\begin{prop}\label{premoving}
    Let $u$ be a positive solution of \eqref{eq:principale}, with $f$ satisfying assumptions $(h_f)$. Assume that $u$ fulfills the condition \eqref{assunzionesullau}. Then there exist $\overline{h}>0$ and $\overline \theta \in (0,{\pi}/{2})$ such that 
    \begin{equation*}
        \frac{\partial u}{\partial V_\theta} >0 \quad \text{in }\Sigma_{{h},x_1} \quad \text{where}\quad \Sigma_{h,x_1}:=\{x\in \R^n: x_1=0,\quad 0\le x_n\le h\},
    \end{equation*}
    for every $(\theta,h)\in [-\overline{\theta},\overline{\theta}]\times [0,\bar h]$.
\end{prop}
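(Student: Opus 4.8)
The plan is to argue by contradiction and to reduce the statement to Hopf's boundary lemma applied to a rescaled limiting problem, in the spirit of the compactness argument used in the proof of Lemma~\ref{piccoleperubations}. Suppose the assertion fails. Then for every $k\in\N$ there are $\theta_k\in[-1/k,1/k]$, $h_k\in[0,1/k]$ and a point $x_k=(0,x_k',t_k)$ with $0\le t_k\le h_k$ such that
\[
\frac{\partial u}{\partial V_{\theta_k}}(x_k)\le 0.
\]
In particular $\theta_k\to0$ (so $V_{\theta_k}\to e_n$) and $t_k\to0$, while the $x_k'\in\R^{n-2}$ are unrestricted. The idea is to translate the $x_k'$ back to the origin and rescale, extract a limit $u_0$ which is a positive solution of a linearised equation in $\R^n_+$ vanishing on $\{x_n=0\}$, and observe that the displayed inequality forces $\partial u_0/\partial x_n(0,0',0)\le0$, contradicting Hopf's lemma.

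Concretely, I would set $a_k:=u(0,x_k',1)>0$ and define, as in \eqref{eq:rescaled1},
\[
u_k(x_1,x',x_n):=\frac{u(x_1,x'+x_k',x_n)}{a_k},
\]
so that $u_k(0,0',1)=1$ and $-\Delta u_k=c_k u_k$ with $c_k(x)=f(u(x_1,x'+x_k',x_n))/u(x_1,x'+x_k',x_n)$. By assumption \eqref{assunzionesullau} the solution $u$ is bounded on $\R^{n-2}\times K$ for every compact $K\subseteq\{x'=0,\ x_n\ge0\}$, and by $(h_f)$ the ratio $f(t)/t$ extends continuously to $t=0$; hence $c_k$ is uniformly bounded on every compact subset of $\overline{\R^n_+}$. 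Then, exactly as in the proof of Lemma~\ref{piccoleperubations}, the classical Harnack inequality in the interior together with the boundary Harnack inequality of Theorem~\ref{Boundary_Harnack} near $\{x_n=0\}$ — all normalised through $u_k(0,0',1)=1$ — yield uniform $L^\infty$ bounds for $u_k$ on the cylinders $\mathcal{C}_{(0,L)}(R)$; boundary and interior elliptic regularity (flat boundary, zero Dirichlet datum, right-hand side $c_ku_k\in L^\infty$) then give uniform $C^{1,\alpha}$ bounds up to $\{x_n=0\}$, so that along a subsequence $u_k\to u_0$ in $C^{1,\alpha'}_{\mathrm{loc}}(\overline{\R^n_+})$ and $c_k\rightharpoonup^\ast c_0$ in $L^\infty_{\mathrm{loc}}$. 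The limit satisfies $-\Delta u_0=c_0u_0$ in $\R^n_+$, $u_0\ge0$, $u_0(0,0',1)=1$, $u_0=0$ on $\{x_n=0\}$ and $\|c_0\|_\infty<\infty$; writing $(\Delta-c_0^-)u_0=-c_0^+u_0\le0$ and invoking the strong maximum principle for $\Delta-c_0^-$ (with $c_0^-\ge0$), we get $u_0>0$ in $\R^n_+$.

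To close the argument I would apply Hopf's boundary lemma to $u_0$ at the point $(0,0',0)$, using an interior ball tangent to $\{x_n=0\}$ there: since $u_0$ is a positive supersolution of $\Delta-c_0^-$, this gives $\partial u_0/\partial x_n(0,0',0)>0$. On the other hand,
\[
\frac{\partial u_k}{\partial V_{\theta_k}}(0,0',t_k)=\frac1{a_k}\,\frac{\partial u}{\partial V_{\theta_k}}(x_k)\le 0,
\]
and since $t_k\to0$, $V_{\theta_k}\to e_n$ and $u_k\to u_0$ in $C^1$ up to $\{x_n=0\}$ near the origin, passing to the limit yields $\partial u_0/\partial x_n(0,0',0)\le0$ — a contradiction. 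This produces the desired $\bar h>0$ and $\bar\theta\in(0,\pi/2)$.

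The step I expect to be the real obstacle is the uniform-in-$k$ compactness of the rescaled family $\{u_k\}$ \emph{up to the boundary}: the normalisation $u_k(0,0',1)=1$ only controls $u_k$ at a single interior point, and $u$ could a priori degenerate (for instance $u(0,x_k',1)\to0$) along the chosen translations, so that interior Harnack alone is not enough near $\{x_n=0\}$. This is precisely where Theorem~\ref{Boundary_Harnack} is indispensable, and it is handled as in the proof of Lemma~\ref{piccoleperubations}. The presence of the angle $\theta$ causes no extra difficulty, since $V_{\theta_k}\to e_n$; in fact the same argument delivers $\partial u/\partial V_\theta>0$ on a full two-dimensional neighbourhood of $\{x_1=0,\ x_n=0\}$ in $\overline{\R^n_+}$, which is the configuration needed to start the sliding–rotating scheme in the subsequent steps.
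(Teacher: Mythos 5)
Your proposal is correct and follows essentially the same route as the paper: contradiction, translation in $x'$ with normalisation $u_k(0,0',1)=1$, uniform bounds via interior plus boundary Harnack (Theorem~\ref{Boundary_Harnack}) and elliptic regularity, passage to a limit $u_0>0$ solving the linearised equation in $\R^n_+$ with zero Dirichlet data, and Hopf's lemma at $(0,0',0)$ contradicting the limiting inequality $\partial u_0/\partial x_n(0,0',0)\le 0$ obtained from $V_{\theta_k}\to e_n$ and $t_k\to 0$. Your explicit handling of the sign of the zeroth-order coefficient via $(\Delta-c_0^-)u_0\le 0$ and of the $C^1$ convergence up to the flat boundary are minor technical refinements of the same argument.
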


\begin{proof}
    We argue by contradiction, and we assume that there exist $(\theta_N,h_N)\rightarrow (0,0)$ and $\overline{x}_N=(0,x'_N,x_{n,N})\in \Sigma_{h_N,x_1}$ such that
    \begin{equation}\label{contraddizione}
        \frac{\partial u}{\partial V_{\theta _N}} (\overline{x}_N) \leq 0  \qquad \text{and}\qquad x_{n,N} \to 0 \quad \text{as } N \to +\infty.
    \end{equation}
    We define the rescaled functions
\begin{equation}\label{eq:rescaled}
w_N(x_1,x',x_n) := \frac{u(x_1,x'+x'_N,x_n)}{u(0,x'_N,1)}.
\end{equation}
Clearly, $w_N(0,0',1) = 1$. Moreover, each $w_N$ satisfies
\begin{equation}\label{eq:wn_eq}
-\Delta w_N(x) = c_N(x) \, w_N(x),
\end{equation}
where
\begin{equation}\label{eq:cn_def}
c_N(x) := \frac{f(u(x_1,x'+x'_N,x_n))}{u(x_1,x'+x'_N,x_N)}.
\end{equation}

Proceeding in the same way as in the proof of Lemma~\ref{piccoleperubations}, exploiting the boundary Harnack,
we can extract a subsequence such that
\[
w_N \to w_0 \quad \text{in } C^{1,\alpha'}_{\mathrm{loc}}(\R^n),
\]
for some $0<\alpha'<1$.  

Moreover, we have that
\begin{equation}\label{eq:3.10}
c_N(\cdot) \rightharpoonup^\ast c_0(\cdot) 
\quad \text{weakly* in } L^\infty_{loc}(\mathcal{K}), 
\end{equation}
up to subsequences, and for any compact set $\mathcal{K}\subset \R^n$.  

As a consequence, the limit $w_0$ satisfies
\[
\begin{cases}
-\Delta w_0 = c_0(x)\, w_0 & \text{in } \R^n_+, \\[0.3em]
w_0(x_1,x',x_n) \ge 0 & \text{in } \R^n_+, \\[0.3em]
w_0(x_1,x',0) = 0 & \text{on } \partial  \mathbb{R}^N_+ .
\end{cases}
\]

By the strong maximum principle, and recalling that $w_N(0,1)=1$ for every $N$, we infer that $w_0>0$ in $\R^n_+$.  
Finally, by Hopf’s boundary lemma, it follows that
\[
\frac{\partial w_0}{\partial x_n}(0,0',0) > 0,
\]
which contradicts \eqref{contraddizione}, where instead we would have $
\frac{\partial w_0}{\partial x_n}(0,0',0) \leq 0$, since $V_{\theta_N}\rightarrow e_n$, and $x_{n,N}\rightarrow 0$.
\end{proof}

\begin{rem}\label{monotonia_nel_triangolino}
    Let $(\bar \theta,\bar h)$ be as above. Applying Proposition \ref{domini_piccoli}, we find the existence of $\bar h=\bar h(\bar \theta)$ small enough, such that \begin{equation*}\label{triangoli_piccoli2}
        w_{\bar \theta,\bar h}\leq 0 \quad\text{in }\mathcal T_{\bar \theta,\bar h}.
    \end{equation*}
    By the Dirichlet condition, using the  Strong Comparison Principle, we obtain 
    \begin{equation}\label{triangoli_piccoli}
        w_{\bar \theta,\bar h}< 0 \quad\text{in }\mathcal T_{\bar \theta,\bar h}.
    \end{equation}
    Moreover, by Proposition \ref{premoving}, we obtain 
    \begin{equation}\label{condizione_al_bordo}
        w_{\theta, h}\leq 0 \quad \text{on }\partial \mathcal{T}_{\theta, h},
    \end{equation}
    for any $(\theta,h)\in [-\bar \theta,\bar \theta]\times [0,\bar h].$
\end{rem}
\vspace{0.3cm}
Before proving the main result of this section, we recall the notation $$u_{\lambda}(x_1,x',x_n):=u(x_1,x',2\lambda-x_n).$$
\begin{prop}\label{monotonia_vicino_al_bordo}
Let $u$ be a positive weak solution of \eqref{eq:principale}, with $f$ satisfying assumptions $(h_f)$. We assume that $u$ fulfills the condition \eqref{assunzionesullau}. Then there exists $\hat \lambda>0$ such that, for any $0<\lambda\le \hat \lambda$, we have 
    $$u<u_\lambda\quad \text{in }\Sigma_\lambda.$$ Moreover, we deduce
    \begin{equation}\label{monotonia}
        \partial_{x_n}u>0\quad \text{in }\Sigma_\lambda.
    \end{equation}
\end{prop}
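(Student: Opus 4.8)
The plan is to combine the rotating-triangle machinery of Section \ref{SezionePreliminare} with a limiting argument $\theta \to 0$, and then pass from the slanted inequality $u < u_{\theta,h}$ in $\mathcal{T}_{\theta,h}$ to the flat reflection inequality $u < u_\lambda$ in the strip. First I would fix $\bar\theta$ and $\bar h$ as produced by Proposition \ref{premoving} and Remark \ref{monotonia_nel_triangolino}, so that we know $w_{\bar\theta,\bar h} < 0$ in $\mathcal{T}_{\bar\theta,\bar h}$ and, crucially, $w_{\theta,h} \le 0$ on $\partial\mathcal{T}_{\theta,h}$ for all $(\theta,h)$ in the box $[-\bar\theta,\bar\theta]\times[0,\bar h]$ — this boundary control is exactly the hypothesis that feeds Lemma \ref{grandispostamenti}. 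Then, for each fixed small $h \le \bar h$, I would choose a continuous path $g(t)=(\theta(t),h(t))$ inside $(0,\pi/2)\times(0,+\infty)$ joining $(\bar\theta,\bar h)$ to $(\theta, h)$ which stays inside the box where \eqref{condizione_al_bordo} holds; Lemma \ref{grandispostamenti} then yields $w_{\theta,h} < 0$ in $\mathcal{T}_{\theta,h}$ for every such $(\theta,h)$.

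The second step is the passage to the limit $\theta \to 0^+$ with $h$ fixed. As $\theta \downarrow 0$, the triangle $\mathcal{T}_{\theta,h}$ "opens up": its slanted side, the line $\theta_1 x_1 + \theta_n x_n = \theta_n h$, rotates toward the horizontal line $\{x_n = h\}$, and the reflection $T_{\theta,h}$ converges locally uniformly to the flat reflection $x \mapsto (x_1,x',2h - x_n)$. Hence for any compact subset of the strip $\Sigma_h = \mathbb{R}^{n-1}\times(0,h)$, that compact set is contained in $\mathcal{T}_{\theta,h}$ for $\theta$ small enough, and $u_{\theta,h} \to u_h$ there by continuity of $u$. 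Passing to the limit in $w_{\theta,h} < 0$ gives $u \le u_h$ in $\Sigma_h$. Since $w_h := u - u_h$ satisfies a linear equation $-\Delta w_h = c_h w_h$ with bounded coefficient (using \eqref{assunzionesullau} and the local Lipschitz bound on $f$ exactly as in \eqref{eq:2.3}–\eqref{eq:2.4}), and $w_h \le 0$ with $w_h \not\equiv 0$ (by the Dirichlet condition on $\{x_n=0\}$, where $u = 0 < u_h$ except on a set of measure zero, cf.\ the remark after \eqref{eq:2.4}), the Strong Comparison Principle upgrades this to $u < u_h$ in $\Sigma_h$. Setting $\hat\lambda := \bar h$ (or any value $\le \bar h$ for which the path in the box exists) gives $u < u_\lambda$ in $\Sigma_\lambda$ for all $0 < \lambda \le \hat\lambda$.

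For the monotonicity conclusion \eqref{monotonia}: fix $x = (x_1,x',x_n) \in \Sigma_{\hat\lambda}$ and pick any $\lambda$ with $x_n < \lambda \le \hat\lambda$. From $u(x) < u_\lambda(x) = u(x_1,x',2\lambda - x_n)$, i.e.\ $u(x_1,x',x_n) < u(x_1,x',2\lambda-x_n)$ for every admissible $\lambda$, one reads off that along the vertical segment the value at the lower point is strictly below the value at the reflected (higher) point; letting $\lambda$ range and differentiating — or more cleanly, applying the standard argument that $u \le u_\lambda$ in $\Sigma_\lambda$ for all $\lambda \le \hat\lambda$ forces $\partial_{x_n} u \ge 0$ in $\Sigma_{\hat\lambda}$, and then noting $\partial_{x_n}u$ solves the linearized equation and cannot vanish by Hopf/the strong maximum principle unless it is identically zero (excluded since $u > 0$ and $u = 0$ on the boundary, so $\partial_{x_n}u(0,x',0) > 0$ by Hopf) — yields $\partial_{x_n} u > 0$ in $\Sigma_{\hat\lambda}$, possibly after shrinking $\hat\lambda$ slightly.

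The main obstacle I anticipate is the $\theta \to 0$ limit: one must be careful that the path required by Lemma \ref{grandispostamenti} genuinely stays in the region where the boundary inequality \eqref{condizione_al_bordo} is available — that is, one cannot let $h$ grow while letting $\theta \to 0$, since Proposition \ref{premoving} only controls $\partial \mathcal{T}_{\theta,h}$ for $(\theta,h)$ in the fixed box $[-\bar\theta,\bar\theta]\times[0,\bar h]$. Keeping $h \le \bar h$ throughout and only varying $\theta$ in $[0,\bar\theta]$ resolves this, but it also means the conclusion is intrinsically local (only $\lambda \le \hat\lambda = \bar h$), which is why this is merely "monotonicity near the boundary" and the extension to all of $\mathbb{R}^n_+$ is deferred to the contradiction argument using Lemma \ref{lemmapreteoprincipale}. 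A secondary technical point is justifying that the limiting inequality $u \le u_h$ is not vacuous and that $w_h \not\equiv 0$, which follows from the non-triviality clause recorded just after \eqref{eq:2.4}.
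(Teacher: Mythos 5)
Your overall route is the paper's: start from the small triangle of Remark \ref{monotonia_nel_triangolino}, use Lemma \ref{grandispostamenti} along a path in the box where \eqref{condizione_al_bordo} holds to get $w_{\theta,h}<0$ in $\mathcal T_{\theta,h}$, let $\theta\to 0^+$, and then upgrade to strict monotonicity. However, there is a genuine gap in your limiting step: you claim that any compact subset of the full strip $\Sigma_h=\R^{n-1}\times(0,h)$ is contained in $\mathcal T_{\theta,h}$ for $\theta$ small. This is false. For $\theta>0$ (i.e.\ $\theta_1>0$) the triangle $\hat T_{\theta,h}$ is bounded by the axis $\{x_1=0\}$ and lies entirely in $\{x_1\ge 0\}$; as $\theta\downarrow 0$ the triangles exhaust only the half-strip $\{x_1>0\}\times(0,h)$ (in the $(x_1,x_n)$-plane), never any point with $x_1<0$. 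So your limit yields $u\le u_h$ only in $\Sigma_h\cap\{x_1\ge 0\}$. The paper closes this by running the same rotating argument with negative $\theta$ (the tilted direction $V_\theta$ pointing the other way), which is legitimate because Proposition \ref{premoving} and Remark \ref{monotonia_nel_triangolino} provide the boundary control \eqref{condizione_al_bordo} for the whole range $\theta\in[-\bar\theta,\bar\theta]$; this gives $u\le u_{h}$ in $\Sigma_{h}\cap\{x_1\le 0\}$ (possibly after reducing $\bar h$), and hence in all of $\Sigma_h$. Your proposal never performs this second half, so as written the conclusion is only obtained on half of the strip.

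A secondary problem is your derivation of the strict inequality \eqref{monotonia}: you pass through ``$\partial_{x_n}u$ solves the linearized equation'', which requires $f'(u)$, while $(h_f)$ only assumes $f$ locally Lipschitz; this step is not available as stated. The paper avoids any linearization: for each $\lambda\in(0,\hat\lambda]$ one applies the Strong Comparison Principle to get $u<u_\lambda$ in $\Sigma_\lambda$ (as in the statement, not just $u\le u_\lambda$) and then Hopf's Lemma to $w_\lambda=u-u_\lambda$, which satisfies a linear equation with bounded coefficient given by the incremental quotient \eqref{eq:2.4}, at the symmetry hyperplane $\{x_n=\lambda\}$, where $w_\lambda=0$; this yields
\[
2\,\partial_{x_n}u(x_1,x',\lambda)=\partial_{x_n}\bigl(u-u_\lambda\bigr)(x_1,x',\lambda)>0
\]
for every $(x_1,x')$ and every $\lambda\in(0,\hat\lambda]$, which is exactly \eqref{monotonia} without shrinking $\hat\lambda$ and without differentiating $f$. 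You should replace your linearization argument by this one.
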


\begin{proof}
Let $(\bar \theta,\bar h)$ be as in \eqref{triangoli_piccoli}. We want to use Lemma \ref{grandispostamenti}. In this regard, for any fixed $(\theta',h')\in (0,\bar\theta)\times (0,\bar h)$, we consider the following function
\begin{equation*}
    g(t)=(\theta(t),h(t)):=(t\theta'+(1-t)\bar\theta,h'), \quad t\in [0,1].
\end{equation*}
We note that by \eqref{triangoli_piccoli} and by \eqref{condizione_al_bordo}, we can apply Lemma \ref{grandispostamenti}, and we obtain $w_{\theta',h'}<0$ in $\mathcal T_{\theta ',h'}.$ Therefore, since $0<\theta'<\bar\theta$ is arbitrary, by continuity we can pass to the 
limit as $\theta' \to 0$ and obtain
\[
u(x_1,x',x_n) \leq u_{h'}(x_1,x',x_n) \quad \text{in } \Sigma_{h'} \cap \{x_1 \geq 0\}, \qquad 0<h'<\bar{h}.
\]
By the invariance of the problem with respect to the axis $\{x_1=0\}$, the same 
argument applies for negative $\theta$, which yields
\[
u(x_1,x',x_n) \leq u_{h'}(x_1,x',x_n) \quad \text{in } \Sigma_{h'} \cap \{x_1 \leq 0\}, \qquad 0<h'<\bar{h},
\]
possibly after reducing $\bar{h}$. Hence we conclude that
\[
u(x_1,x',x_n) \leq u_{h'}(x_1,x',x_n) \quad \text{in } \Sigma_{h'}, \qquad \forall\, h' \in (0,\bar{h}).
\]

Now we set $\hat \lambda=\bar h$. Therefore, by Strong Comparison Principle and by Hopf’s Lemma, for every $\lambda \in (0,\hat\lambda]$ and every 
$(x_1,x') \in \mathbb{R}^{n-1}$, we obtain
\begin{equation*}
2\,\partial_{x_n} u(x_1,x',\lambda) 
= \partial_{x_n} \big(u-u_\lambda\big)(x_1,x',\lambda) > 0.
\end{equation*}
This proves \eqref{monotonia}.
\end{proof}

\section{Proof of the main results}
Now we introduce some notation. We set $$\Lambda:=\{\lambda>0 : u<u_{\lambda '}\text{ in } \Sigma_{\lambda'}\quad \forall \lambda'<\lambda \}.$$ By Proposition \ref{monotonia_vicino_al_bordo}, the set $\Lambda$ is not empty. Moreover, we define 
\begin{equation}\label{il_sup}
  \overline \lambda:=\sup \Lambda.   
\end{equation}
Our goal is to prove that $\overline{\lambda}=+\infty$. We note that, by the Strong Comparison Principle, if $\overline{\lambda}<+\infty$, and as above, we deduce 
\begin{equation}\label{cose1}
    u<u_\lambda\quad \text{and}\quad  
    \partial_{x_n}u>0\quad \text{in }\Sigma_\lambda,
\end{equation}
for any $\lambda\in (0,\overline{\lambda}].$
\vspace{0.2cm}

To establish our main results, we will rely on the following lemma.
\begin{lem}\label{lemmapreteoprincipale}
    Let $\overline\lambda$ be defined as in \eqref{il_sup}. Then there exists $\overline \delta>0$ such that for any $-\overline \delta\le\theta\le\overline\delta$ and for any $0<\lambda<\overline \lambda+\overline\delta$, we have 
    \begin{equation*}
        u<u_{\theta,\lambda} \quad \text{in }\Sigma_{\lambda,x_1}, \quad \text{where}\quad \Sigma_{\lambda,x_1}:=\{x\in \R^n: x_1=0,\quad 0\le x_n\le \lambda\}.
    \end{equation*}
\end{lem}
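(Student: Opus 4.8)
The plan is to argue by contradiction, essentially repeating the compactness/boundary-Harnack scheme already developed in the proof of Lemma \ref{piccoleperubations} and Proposition \ref{premoving}, but now translating the rescaling to points sitting on the slab $\Sigma_{\overline\lambda,x_1}$ rather than near $\partial\R^n_+$. Suppose the statement fails: then there exist sequences $\theta_N\to 0$, $\lambda_N\to\overline\lambda$ (with $\lambda_N<\overline\lambda+1/N$, say), and points $\overline x_N=(0,x'_N,x_{n,N})$ with $0\le x_{n,N}\le\lambda_N$ such that $u(\overline x_N)\ge u_{\theta_N,\lambda_N}(\overline x_N)$. Since $x_{n,N}$ ranges in the compact interval $[0,\overline\lambda+1]$, after passing to a subsequence we may assume $x_{n,N}\to\tilde x_n\in[0,\overline\lambda]$. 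Define the rescaled functions $v_N(x_1,x',x_n):=u(x_1,x'+x'_N,x_n)/u(0,x'_N,1)$, which satisfy $v_N(0,0',1)=1$ and $-\Delta v_N=c_N v_N$ with $c_N$ locally uniformly bounded by $(h_f)$ and \eqref{assunzionesullau}, exactly as in \eqref{eq:cn_def1}–\eqref{eq:cn_bound1}.

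Next I would run the same two-part uniform estimate: the interior Harnack inequality bounds $v_N$ away from $\{x_n=0\}$, and Theorem \ref{Boundary_Harnack} (boundary Harnack, using $u_N$ vanishing on $\{x_n=0\}$ and $\rho$-comparison) bounds $v_N/\rho$ near the boundary; together these give a uniform $L^\infty$ bound on $v_N$ on every cylinder $\mathcal C_{(0,L)}(R)$. After odd reflection across $\{x_n=0\}$, elliptic regularity upgrades this to a local $C^{1,\alpha}$ bound, so by Ascoli–Arzelà $v_N\to v_0$ in $C^{1,\alpha'}_{\mathrm{loc}}(\R^n)$ and $c_N\overset{*}{\rightharpoonup}c_0$ in $L^\infty_{\mathrm{loc}}$. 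The limit $v_0$ solves $-\Delta v_0=c_0 v_0$ in $\R^n_+$, is nonnegative, vanishes on $\partial\R^n_+$, and is not identically zero (since $v_0(0,0',1)=1$), hence $v_0>0$ in $\R^n_+$ by the strong maximum principle. Crucially, because $\lambda_N\to\overline\lambda$ and $\theta_N\to 0$, the reflections $T_{\theta_N,\lambda_N}$ converge to the plain reflection $x\mapsto(x_1,x',2\overline\lambda-x_n)$, so $(v_N)_{\theta_N,\lambda_N}\to (v_0)_{\overline\lambda}$ locally; passing to the limit in the property \eqref{cose1} (which holds strictly for every $\lambda\le\overline\lambda$, and hence passes to the non-strict inequality for the limiting parameter) gives $v_0\le (v_0)_{\overline\lambda}$ in $\Sigma_{\overline\lambda}$, i.e. $w_{\overline\lambda}:=v_0-(v_0)_{\overline\lambda}\le 0$ and satisfies $-\Delta w_{\overline\lambda}=\tilde c\, w_{\overline\lambda}$ in $\Sigma_{\overline\lambda}$. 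Since $w_{\overline\lambda}$ cannot be identically zero (the Dirichlet datum forces $w_{\overline\lambda}<0$ on $\{x_n=0\}$, as $v_0>0$ there in the interior limit... more precisely $v_0=0,\ (v_0)_{\overline\lambda}>0$ on $\{x_n=0\}$), the strong comparison principle yields $w_{\overline\lambda}<0$ strictly in $\Sigma_{\overline\lambda}$. On the other hand, the contradiction hypothesis \eqref{contraddizione}-type inequality $u(\overline x_N)\ge u_{\theta_N,\lambda_N}(\overline x_N)$ rescales to $v_0(0,0,\tilde x_n)\ge (v_0)_{\overline\lambda}(0,0,\tilde x_n)$, i.e. $w_{\overline\lambda}(0,0,\tilde x_n)\ge 0$ at the interior point $(0,0,\tilde x_n)$ with $\tilde x_n\in(0,\overline\lambda]$ — contradicting strict negativity. (The case $\tilde x_n=0$ is handled by Hopf's lemma applied to $-w_{\overline\lambda}>0$: the outer normal derivative would have a strict sign, again incompatible with the limiting inequality.)

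The main obstacle I anticipate is \emph{not} the compactness argument, which is by now routine, but making the limiting comparison rigorous at the endpoint parameter $\overline\lambda$: one must check that $\Sigma_{\overline\lambda}$ is an admissible domain for the strong comparison principle with a bounded zeroth-order coefficient (this is where the $C^{1,\alpha'}$ convergence and local boundedness of $c_0$ are used), and that the strict inequalities \eqref{cose1} valid on the open sub-slabs indeed close up to give $v_0\le (v_0)_{\overline\lambda}$ on all of $\Sigma_{\overline\lambda}$ including its top face $\{x_n=\overline\lambda\}$. A secondary delicate point is the bookkeeping for $\tilde x_n$: one needs $\tilde x_n>0$ to get the interior contradiction directly, and when $\tilde x_n=0$ one falls back on Hopf's boundary lemma exactly as in Proposition \ref{premoving}, using that $V_{\theta_N}\to e_n$ so that $\partial u/\partial V_{\theta_N}$ at $\overline x_N$ rescales to $\partial_{x_n} w_0$ at the boundary point. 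Once these limiting issues are pinned down, the $\theta$-range $[-\overline\delta,\overline\delta]$ and the $\lambda$-range $(0,\overline\lambda+\overline\delta)$ come out of the contradiction scheme automatically, since any violating sequence would have to concentrate exactly as above.
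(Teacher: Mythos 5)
Your compactness/rescaling scheme (translate by $x'_N$, normalize by $u(0,x'_N,1)$, interior plus boundary Harnack, odd reflection, $C^{1,\alpha'}_{\mathrm{loc}}$ convergence, strong maximum principle for the limit) is exactly the paper's, and the passage to the limit in \eqref{cose1} to get $u_0\le u_{0,\tilde\lambda}$ and then, by the strong comparison principle, $u_0<u_{0,\tilde\lambda}$ in the open slab is also the paper's step. However, there are two genuine gaps. First, from $\lambda_N<\overline\lambda+\delta_N$ you cannot conclude $\lambda_N\to\overline\lambda$; after extracting a subsequence you only get $\lambda_N\to\tilde\lambda\in[0,\overline\lambda]$. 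The case $\tilde\lambda\in(0,\overline\lambda)$ is harmless (the same argument runs with $\tilde\lambda$ in place of $\overline\lambda$), but the degenerate case $\tilde\lambda=0$ must be excluded separately: the limiting slab is empty and no comparison argument applies there. The paper rules it out by observing that for $\theta_N,\lambda_N$ small the contradiction inequality $u(\overline x_N)\ge u_{\theta_N,\lambda_N}(\overline x_N)$ with $0<x_{n,N}<\lambda_N$ is incompatible with Proposition \ref{premoving} (strict monotonicity in the directions $V_\theta$ near the boundary); your proposal never addresses this.

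Second, and more seriously, your claimed contradiction ``$w_{\overline\lambda}(0,0,\tilde x_n)\ge 0$ at an interior point with $\tilde x_n\in(0,\overline\lambda]$ contradicts strict negativity'' fails precisely in the critical case $\tilde x_n=\tilde\lambda$: the limit point then lies on the reflection hyperplane $\{x_n=\tilde\lambda\}$, where $w_{\tilde\lambda}=u_0-u_{0,\tilde\lambda}$ vanishes identically, so the non-strict inequality gives no contradiction at all. (Conversely, the case $\tilde x_n=0$, which you flag as delicate and treat with Hopf, is actually immediate: there $u_0=0$ while $u_{0,\tilde\lambda}>0$.) To close the case $\tilde x_n=\tilde\lambda$ one needs quantitative derivative information before passing to the limit: the paper applies the mean value theorem on the segment joining $(0,0',x_{n,N})$ to its reflected point $T_{\theta_N,\lambda_N}(0,0',x_{n,N})$ to produce points $\hat x_N$ with $\partial w_N/\partial V_{\theta_N}(\hat x_N)\le 0$; since $\hat x_N\to(0,0',\tilde\lambda)$ and $V_{\theta_N}\to e_n$, the $C^{1,\alpha'}_{\mathrm{loc}}$ convergence yields $\partial_{x_n}u_0(0,0',\tilde\lambda)\le 0$, which contradicts Hopf's lemma applied to $w_{\tilde\lambda}<0$ in $\Sigma_{\tilde\lambda}$ with $w_{\tilde\lambda}=0$ on $\{x_n=\tilde\lambda\}$ (giving $2\,\partial_{x_n}u_0(0,0',\tilde\lambda)>0$). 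This mean-value/Hopf step at the reflection plane is the heart of the lemma and is missing from your argument.
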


\begin{proof}
  We prove the result by contradiction. If we suppose that the result is false, then there exists a sequence of small $\delta_N\rightarrow 0$, together with parameters  $-\delta_N<\theta_N<\delta_N$, $0<\lambda_N<\overline \lambda+\delta_N$ and points $(0,x'_N,x_{n,N})\in \Sigma_{\lambda_N,x_1}$ such that 
  \begin{equation}\label{contraddizione3}
      u(0,x'_N,x_{n,N})\ge u_{\theta_N,\lambda_N}(0,x'_N,x_{n,N}),
  \end{equation}
with $0<x_{n,N}<\lambda_N$. Up to subsequences, we assume that $\lambda_N\rightarrow\tilde \lambda\le \overline{\lambda}$ and $x_{n,N}\rightarrow \tilde x_n\le \tilde \lambda.$ We observe that $\tilde{\lambda} > 0$; otherwise, inequality \eqref{contraddizione3} 
would contradict Proposition~\ref{premoving}. Now we consider the following sequences of functions given by 
$$w_N(x_1,x',x_n) := \frac{u(x_1,x'+x'_N,x_n)}{u(0,x'_N,1)}.$$
We note that, by \eqref{contraddizione3} and using the mean value theorem, we deduce that 
\begin{equation}\label{contraddizione4}
    \frac{\partial w_N}{\partial V_{\theta_N}}(\hat x_N)\leq 0,
\end{equation}
where $\hat x_N$ is a point lying on the line from $(0,0,x_{n,N})$ to $T_{\theta_N,\lambda_N}(0,0,x_{n,N})$.
As in the proof of Lemma \ref{piccoleperubations}, we obtain the existence of a function $u_0$ such that 

\[
u_N \to u_0 \quad \text{in } C^{1,\alpha'}_{\mathrm{loc}}(\R^n),
\]
for some $0<\alpha '<1$.
By means of a diagonal argument, we can construct a function $u_0$ such that, in the limit,

\[
\begin{cases}
-\Delta u_0 = c_0(x)\, u_0 & \text{in } \R^n_+, \\[0.3em]
u_0(x_1,x',x_n) \ge 0 & \text{in } \R^n_+, \\[0.3em]
u_0(x_1,x',0) = 0 & \text{on } \partial \R^n_+ .
\end{cases}
\]
Since $u < u_{\tilde\lambda}$ in $\Sigma_{\tilde\lambda}$ (see \eqref{cose1}), by the 
definition of $w_N$ we obtain that 
\[
u_0 \leq u_{0,\tilde\lambda} \quad \text{in } \Sigma_{\tilde\lambda}.
\] 
Moreover, since $u_0(0,0',1)=1$, the Strong Comparison Principle (using the Dirichlet 
boundary condition) yields
\[
u_0 < u_{0,\tilde\lambda} \quad \text{in } \Sigma_{\tilde\lambda}.
\]

Now, if $x_{n,N} \to \tilde{x}_n < \tilde{\lambda}$, then by \eqref{contraddizione3} 
we arrive at a contradiction. On the other hand, passing to the limit in 
\eqref{contraddizione4} gives
\[
\frac{\partial u_0}{\partial x_n}(0,0',\tilde{\lambda}) \leq 0,
\]
which contradicts the fact that 
\[
\frac{\partial u_0}{\partial x_n}(0,0',\tilde{\lambda}) > 0,
\]
by Hopf's Lemma.
  
\end{proof}

\begin{proof}[Proof of Theorem \ref{teo:Monotonia}]
We want to prove that $\overline \lambda=+\infty$. We argue by contradiction, and we assume that $\overline \lambda<+\infty.$ By Remark \ref{monotonia_nel_triangolino}, we obtain the existence of $(\bar\theta,\bar h)$ such that $w_{\bar\theta,\bar h}<0$ in $\mathcal{T}_{\bar\theta,\bar h}.$ Then, recalling the value $\overline\delta$ given in Lemma \ref{lemmapreteoprincipale},  we fix $\theta_0>0$  with $\theta_0\leq \overline \delta$ and $\theta_0\leq \bar\theta$. Let us set $$h_0:=h_0(\theta_0),$$ 
such that the set $\mathcal{T}_{\theta_0,h_0}$ is contained in $\mathcal{T}_{\bar\theta,\bar h}$. In this way, see also Proposition \ref{domini_piccoli}, $w_{\theta_0,h_0}<0$ in $\mathcal{T}_{\theta_0,h_0}$. It is convenient to assume that $h_0 \leq \hat{\lambda}$, with $\hat{\lambda}$ as in Proposition \ref{monotonia_vicino_al_bordo}. 
For any $h_0 < h \leq \overline{\lambda}+\bar{\delta}$ and $0<\theta<\theta_0$, we apply the 
sliding-rotating method using Lemma \ref{grandispostamenti} with
\[
g(t) = (\theta(t),h(t)) := \big( t\theta+(1-t)\theta_0,\, th+(1-t)h_0\big), 
\qquad t \in [0,1].
\]
By Lemma \ref{lemmapreteoprincipale}, the boundary conditions required to apply Lemma \ref{grandispostamenti} are satisfied, and 
hence, by Lemma \ref{grandispostamenti}, we conclude that $w_{\theta,h}<0$ in $\mathcal{T}_{\theta,h}$. 

Proceeding as in the proof of Proposition \ref{monotonia_vicino_al_bordo}, we deduce that 
\[
u(x_1,x',x_n) < u_\lambda(x_1,x',x_n) \quad \text{in } \Sigma_\lambda \quad \text{for all } 0<\lambda \leq \bar{\lambda}+\bar{\delta}.
\]
This leads to a contradiction unless $\bar{\lambda}=+\infty$. Finally, arguing as in 
the proof of Proposition \ref{monotonia_vicino_al_bordo}, we obtain
\[
\partial_{x_n} u > 0 \quad \text{in } \mathbb{R}^n_+ 
.
\]

\end{proof}

We are ready to prove Theorem \ref{teo:Lane-Emden}.

\begin{proof}[Proof of Theorem  \ref{teo:Lane-Emden}]
	First of all we observe that any nonnegative solution is actually positive if it is not trivial. 
As a consequence of our Theorem \ref{teo:Monotonia}, any positive solution of \eqref{eq:laneemden} is monotone increasing in the $x_n$-direction. The claim then follows directly from \cite[Theorem 1.1]{DSS}, or from \cite{DFT} since any monotone solution is also stable. 
\end{proof}

\section*{Acknowledgements}
B. Sciuzni and D. Vuono have been supported by PRIN PNRR P2022YFAJH \emph{Linear and Nonlinear PDEs: New directions and applications.}  The authors have been partially supported by \emph{INdAM-GNAMPA Project Regularity and qualitative aspects of nonlinear PDEs via variational and non-variational approaches} E5324001950001.

\vspace{0.2 cm}
  
\textbf{Declarations.} Data sharing is not applicable to this article as no datasets were generated or analyzed during the current study.
Conflicts of interest: The authors have no conflicts of interest to declare.

\end{document}